\definecolor{mathieu}{rgb}{0.5, 0.7, 0.8}
\newcommand{\scal}[2]{\left\langle {#1} \middle| {#2} \right\rangle}
\newcommand{\sigmaD}[1]{\underline{\sigma} \big(#1\big) } 
\newcommand{\sigmaU}[1]{\bar{\sigma} \big(#1\big) } 
\newtheorem{thm}{Theorem}
\newtheorem{assum}{Assumption}
\newtheorem{lem}{Lemma}
\newtheorem{cor}{Corollary}
\newtheorem{rem}{Remark}
\begin{document}

\title{Design of low-dimensional controllers for high-dimensional systems}
\date{October 2023}

\author{Mathieu Bajodek, Hugo Lhachemi, Giorgio Valmorbida}


\maketitle


\begin{abstract}
This article presents proposals for the design of reduced-order controllers for high-dimensional dynamical systems. The objective is to develop efficient control strategies that ensure stability and robustness with reduced computational complexity. By leveraging the concept of partial pole placement, which involves placing a subset of the closed-loop system's poles, this study aims to strike a balance between reduced-order modeling and control effectiveness. The proposed approach not only addresses the challenges posed by high-dimensional systems but also provides a systematic framework for controller design. When an infinite-dimensional operator is Riesz spectral, our theoretical analysis highlights the potential of partial pole placement in advancing control design. Model uncertainties, introduced by an error on the spectral decomposition, can also be allowed. This is illustrated in particular in the case of systems modeled by coupled ordinary-partial differential equations (ODE-PDE).
\end{abstract}



\section{Introduction}
\label{sec0}

In modern engineering practice, the control of dynamical systems has become crucial due to their widespread applications across diverse domains such as aerospace, robotics, manufacturing, and power systems~\cite{Goodwin00}. The control design for dynamical systems faces significant growth in size and complexity over the years due to complex models and specifications. For instance, one can mention the interconnections of numerous subsystems, with subsystems described by partial differential equations. As the dimension of these dynamical systems grows, the design and implementation of controllers that guarantee stability, performance, and robustness have become hot challenges. Synthesizing a controller for a high-dimensional system requires the use of sophisticated methods~\cite{Silnaj05}.\\ 

Control design methods such as backstepping~\cite{Krstic08} and forwarding~\cite{Marx21} became the standard for linear infinite-dimensional systems. These approaches involve manipulating operators and constructing infinite-dimensional controllers~\cite{Bribiesca15,Meurer12}. However, when it comes to implementing such controllers, numerical issues arise due to the substantial computational resources required~\cite{Karafyllis17}. Moreover, the computational burden can lead to instabilities that are difficult to prevent~\cite{Mondie03}. 
An alternative approach involves reducing the order of the system and designing the controller using the reduced order model~\cite{Rivera87,Harkort14}. Although reduced-order controllers stabilize the reduced model effectively, ensuring the stability of the original high-dimensional system is challenging~\cite{Mironchenko20, Zhang10}.\\

In the context of dissipative infinite-dimensional systems, the main result focuses on the existence of an order from which the controller effectively stabilizes the closed-loop system, under the assumption that the reduced-order model accurately captures the rightmost system's spectrum~\cite{Morris20}. 
In the literature, different and more direct techniques are proposed for synthesizing fixed-order controllers with the goal of minimizing a specific cost while satisfying stabilization constraints~\cite{Yang07,Henrion03}. In an even more compact form, the design of proportional integral derivative controllers is performed for distributed parameter system~\cite{Alvarez12}, hyperbolic PDEs~\cite{Boussaada20}, parabolic PDEs~\cite{Lhachemi22} or ODE-PDE couplings~\cite{Boulouz21}. Implicitly, these approaches involve performing partial pole placement~\cite{Boussaada22}. For that matter, as addressed in~\cite{Datta12,Katewa21}, partial pole placement rewrites as optimization problems with constraints on the location of eigenvalues of the closed-loop system in predefined regions of the complex plane. By strategically placing a limited number of poles, reduced-order controllers focus on stabilizing and controlling the unstable dynamics of the system. A trade-off between computational complexity and control performance can then be reached.\\
This work delves into the design of reduced-order controllers using partial pole placement techniques for linear infinite-dimensional systems~\cite{Curtain82}. The main goal is to develop control strategies that not only reduce the computational burden but also ensure stability, performance, and robustness for the original high-dimensional dynamical system~\cite{Morris20,Jones10,Bergeling20}. The proposed approach provides a systematic framework for reduced-order observer-based control design.
It is based on a mild set of assumptions, outlined below:\vspace{-0.2cm}
\begin{itemize}
    \item the system is exactly controllable and observable, \vspace{-0.2cm}
    \item the system has bounded input and output operators, \vspace{-0.2cm}
    \item the system has a finite number of eigenvalues on the right of any vertical line in the complex plane,\vspace{-0.2cm}
    \item the ratio between the input-output operators upper bound and the state operator lower bound is bounded.
    \vspace{-0.2cm}
\end{itemize}
Taking inspiration from the approach introduced in~\cite{Balas83,Sakawa83}, an output feedback dynamic controller that reconstructs and stabilizes the unstable modes while preserving the stability of the remaining stable modes is constructed in two steps. 
On one hand, a static state feedback control gain is set up, which shifts the unstable eigenvalues with the same dimension as the unstable part of the initial system, which is denoted~$n_0$. On the other hand, a Luenberger-like observer of order $n\geq n_0$ is built~\cite{Luenberger64}. The estimation error on the unstable modes is rendered stable with observation gains, while the stable modes are simply reconstructed in open loop. It is then shown that there exists a sufficiently large order $n$ such that this finite-dimensional output feedback dynamic controller stabilizes the infinite-dimensional system. The stability is proven based on the construction of a Lyapunov function. The practical aspects of implementing these controllers are also investigated, taking into account uncertainties in the eigenvalues of input and output operators. It is proven that under small enough model mismatch, the result still holds.\\

Section~2 summarizes the problem and assumptions with two motivating examples. Section~3 proposes a controller structure of order $n$ and proves the stability of the closed-loop system for a sufficiently large order $n$.
Section~4 extends this result to the control synthesis subject to modeling errors. Section~5 finally apply numerically this common framework to specific examples such that ODE-transport and ODE-reaction-diffusion interconnections.\\

\textbf{Notation:} Throughout this article, $\mathbb{N}$, $\mathbb{R}$, $\mathbb{C}$, $\mathbb{R}^{n\times m}$, $\mathbb{C}^{n\times m}$ and $\ell_2(\mathbb{N})$ denote the sets of integers, real or complex numbers, real or complex matrices of size $n\times m$ and the space of square summable complex sequences, respectively. For any $s\in\mathbb{C}$, $\mathrm{Re}(s)$ and $\mathrm{Im}(s)$ stand for its real and imaginary parts, respectively. For any $M\in\mathbb{C}^{n\times m}$, $M^\ast$ is the conjugate transpose of $M$. For any square matrix $M$, $\sigmaD{M}$ and $\sigmaU{M}$ denote the minimal and maximal real parts of its eigenvalues, respectively. Furthermore, $M\succ 0$ denotes a positive definite symmetric matrix. Lastly, we define the following functions
\begin{equation*}
    \lvert \cdot\rvert:
    \left\{
    \begin{aligned}
        \mathbb{C}^n &\to \mathbb{R},\\
        x &\mapsto \sqrt{x^\ast x},
    \end{aligned}
    \right.\qquad
    \lVert \cdot\rVert:
    \left\{
    \begin{aligned}
        \ell_2(\mathbb{N}) &\to \mathbb{R},\\
        (x_i)_{i\in\mathbb{N}} &\mapsto \sqrt{\sum_{i\in\mathbb{N}} |x_i|^2},
    \end{aligned}
    \right.\qquad
    \scal{\cdot}{\cdot}:
    \left\{
    \begin{aligned}
        \ell_2(\mathbb{N})\times \ell_2(\mathbb{N}) &\to \mathbb{R},\\
        (x_i)_{i\in\mathbb{N}},(z_i)_{i\in\mathbb{N}} &\mapsto \sum_{i\in\mathbb{N}} x_i^\ast z_i.
    \end{aligned}
    \right.
\end{equation*}

\section{Preliminaries and assumptions}
\label{sec1}

\subsection{System data}

We focus on the following subclass of linear infinite-dimensional system
\begin{equation}\label{eq:systemref}
\left\{
\begin{aligned}
    \tfrac{\mathrm{d}}{\mathrm{d}t}x(t) &= \mathcal{A} x(t) + \mathcal{B} u(t),\\
    y(t) &= \mathcal{C} x(t),
\end{aligned}
\right.\quad \forall t\geq 0,
\end{equation}
where the operator $\mathcal{A}:D(\mathcal{A})\subset\ell_2(\mathbb{N})\!\to\! \ell_2(\mathbb{N})$ has generalized eigenvectors $\{\upsilon_k\}_{k\in\mathbb{N}}$ which form a Riesz basis~\cite{Gohberg78,Guo01} and where the operators $\mathcal{B}:\mathbb{R}^{n_u}\!\to\! \ell_2(\mathbb{N})$ and $\mathcal{C}:\ell_2(\mathbb{N})\!\to\! \mathbb{R}^{n_y}$ are bounded.

\begin{assum}\label{assum1}
For any $\delta>0$, there exists an order $n\in\mathbb{N}$ such that the generalized eigenvectors $\{\upsilon_k\}_{k\in\{n,\dots,\infty\}}$ are associated to algebraically simple eigenvalues with real parts smaller than $-\delta$.
\end{assum}

Under Assumption~\ref{assum1}, consider $(n_0,n_1)\in\mathbb{N}^2$ the number of eigenvalues of the operator~$\mathcal{A}$, repeated according to their multiplicity, which respectively
\begin{itemize}
    \item have real parts larger than $-\delta$, regarless of their algebraic multiplicity, 
    \item have real parts strictly smaller than $-\delta$ and are algebraically multiple.
\end{itemize}
For any integer $n\geq n_0+n_1$ (that will be specified later when discussing control design), system~\eqref{eq:systemref} takes, up to a change of variable, the following decomposed form
\begin{equation}\label{eq:system}
\left\{
\begin{aligned}
    \tfrac{\mathrm{d}}{\mathrm{d}t}\begin{bsmallmatrix}x_0(t)\\x_1(t)\end{bsmallmatrix} &= \begin{bsmallmatrix} A_0 & 0\\ 0 & A_1 \end{bsmallmatrix} \begin{bsmallmatrix}x_0(t)\\x_1(t)\end{bsmallmatrix} + \begin{bsmallmatrix}B_0\\B_1\end{bsmallmatrix} u(t),\\
    \frac{\mathrm{d}}{\mathrm{d}t}z_i(t) &= a_i z_i(t) + b_i u(t),\quad \forall i\in\mathbb{N},\\
    y(t) &= \begin{bsmallmatrix} C_0 & C_1 \end{bsmallmatrix} \begin{bsmallmatrix}x_0(t)\\x_1(t)\end{bsmallmatrix} + \sum_{i\in\mathbb{N}} c_i z_i(t),
\end{aligned}
\right.\quad \forall t\geq 0 .
\end{equation}
Here $(A_0,A_1,a_i)\in\mathbb{C}^{n_0\times n_0}\times \mathbb{C}^{n-n_0\times n-n_0}\times \mathbb{C}$ are on the complex Jordan normal form verifying $\sigmaU{A_1}< -\delta$ and $\mathrm{Re}(a_i)< \sigmaD{A_1} $, $(B_0,B_1,b_i)\in\mathbb{C}^{n_0\times n_u}\times \mathbb{C}^{n-n_0\times n_u}\times\mathbb{C}^{1\times n_u}$, $(C_0,C_1,c_i)\in \mathbb{C}^{n_y\times n_0}\times \mathbb{C}^{n_y\times n-n_0}\times\mathbb{C}^{n_y\times 1}$. The finite-dimensional part of the state $\begin{bsmallmatrix}x_0\\x_1\end{bsmallmatrix}$ belongs to $\mathbb{C}^n$ and the infinite-dimensional remainder $(z_i)_{i\in\mathbb{N}}$ belongs to $\ell_2(\mathbb{N})$.

\begin{rem}\label{rem3}
    Note that this decomposition is often used for the eigenvalue-based control of linear infinite-dimensional systems involving transport~\cite{Michiels03}, parabolic~\cite{Curtain82,Katz20}, wave~\cite{Lhachemi22} or beam~\cite{Han11} equations.
\end{rem}

\begin{rem}\label{rem4}
    Note that the framework of Assumption~1 is large enough to capture certain PDE-ODE interconnection, as discussed later in the paper. However, infinite-dimensional systems with accumulation points or rightmost asymptotic branch in the root locus do not satisfy Assumption~1. It is the case, for instance, of neutral time-delay systems with non Shur neutral parts~\cite{Gu03}.
\end{rem}

In the context of Assumption~1, the objective of this article is the design of a finite-dimensional controller (of dimension $n$) that takes the form: 
\begin{equation}\label{eq:cont}
\left\{
\begin{aligned}
    \tfrac{\mathrm{d}}{\mathrm{d}t}\hat{x}(t) &= L \hat{x}(t) + M y(t) + N u(t),\\
    u(t) &= K \hat{x}(t)
\end{aligned}
\right.
\end{equation}
where $L\in\mathbb{R}^{n\times n}$, $M\in\mathbb{R}^{n\times n_y}$, $N\in\mathbb{R}^{n\times n_u}$ and $K\in\mathbb{R}^{n_u\times n}$ are matrices to be designed such that~\eqref{eq:cont} stabilizes~\eqref{eq:system} with the exponential decay rate $\delta$, meaning that
\begin{equation}\label{eq:defstab}
    \exists \kappa\geq 1,\; \delta>0\,;\quad |x(t)|^2 + \lVert z_i(t) \rVert^2 + |\hat{x}(t)|^2 \leq \kappa \exp(-\delta t)\left(|x(0)|^2 + \lVert z_i(0) \rVert^2 + |\hat{x}(0)|^2 \right) ,\quad\forall t \geq 0. 
\end{equation}

\begin{rem}\label{rem5}
    Owing to the consideration of Riesz spectral systems and to Assumption~\ref{assum1}, there exists a continuous and bijective linear transformation~$\mathcal{T}:\ell_2(\mathbb{N})\to \mathbb{C}^n\times\ell_2(\mathbb{N})$ from system~\eqref{eq:systemref} to system~\eqref{eq:system} linked to the Jordan normalization and the rearranging by decreasing real parts. It satisfies
    $$\mathcal{T}x=\begin{pmatrix}\begin{bsmallmatrix}x_0\\x_1\end{bsmallmatrix}\\(z_i)\end{pmatrix},\quad \mathcal{T}\mathcal{A}\mathcal{T}^{-1}=\begin{pmatrix}\begin{bsmallmatrix}A_0&0\\0&A_1\end{bsmallmatrix}\\(a_i)\end{pmatrix},\quad \mathcal{T}\mathcal{B}=\begin{pmatrix}\begin{bsmallmatrix}B_0\\B_1\end{bsmallmatrix}\\(b_i)\end{pmatrix},\quad \mathcal{C}\mathcal{T}^{-1}=\begin{pmatrix}\begin{bsmallmatrix}C_0&C_1\end{bsmallmatrix}&(c_i)\end{pmatrix}.$$
    It is worth mentioning that the transformation $\mathcal{T}^{-1}$ is also continuous according to the open mapping theorem~\cite{Conway10}.
    Hence, the exponential stability of systems~\eqref{eq:systemref}-\eqref{eq:cont} and~\eqref{eq:system}-\eqref{eq:cont} is equivalent. Throughout this paper, for simplicity reasons, we will only focus on the system~\eqref{eq:system}-\eqref{eq:cont} and on the exponential stability defined in~\eqref{eq:defstab}. 
\end{rem}

\subsection{Controllability and observability}

The possibility to successfully design an output feedback controller \eqref{eq:cont} for \eqref{eq:system} to achieve exponential stabilization with decay rate $\kappa > 0$ in the sense of \eqref{eq:defstab} heavily relies on the controllability and observability properties of the finite-dimensional part of dimension $n_0$ of the plant. We discuss here the controllability and observability properties of the pairs $(A_0,B_0)$ and $(C_0,A_0)$, respectively.


\begin{assum}\label{assum2}
    The pair $(\mathcal{A},\mathcal{B})$ is exactly controllable and that the pair $(\mathcal{C},\mathcal{A})$ is exactly observable. 
\end{assum}

\begin{lem}
    Under Assumption~\ref{assum2}, the pair $(A_0,B_0)$ is controllable and the pair $(C_0,A_0)$ is observable.
\end{lem}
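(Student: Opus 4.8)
The plan is to prove the contrapositive via the Popov–Belevitch–Hautus (PBH) test. Recall that the finite-dimensional pair $(A_0,B_0)$ is controllable if and only if the matrix $\begin{bsmallmatrix} sI - A_0 & B_0 \end{bsmallmatrix}$ has full row rank $n_0$ for every $s\in\mathbb{C}$, and since the spectrum of $A_0$ consists precisely of the eigenvalues of $\mathcal{A}$ with real part larger than $-\delta$, it suffices to check this rank condition at each eigenvalue $s=\lambda$ of $A_0$. The dual statement for observability of $(C_0,A_0)$ via the rank of $\begin{bsmallmatrix} sI - A_0 \\ C_0 \end{bsmallmatrix}$ will follow by an entirely symmetric argument, so I would present the controllability half in detail and remark that observability is obtained by duality.

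First I would suppose, for contradiction, that $(A_0,B_0)$ is not controllable. By the PBH test there exists an eigenvalue $\lambda$ of $A_0$ and a nonzero left eigenvector $w\in\mathbb{C}^{n_0}$ with $w^\ast A_0 = \lambda w^\ast$ and $w^\ast B_0 = 0$. The key step is to lift this finite-dimensional obstruction to an obstruction for the full operator $\mathcal{A}$, thereby contradicting the exact controllability assumed in Assumption~\ref{assum2}. Because the decomposition~\eqref{eq:system} is block-diagonal in $\mathcal{A}$ with the $A_0$-block completely decoupled from the $A_1$-block and from the scalar modes $a_i$, the vector $w$ embeds naturally into a left eigenvector of the decomposed operator by padding with zeros on the $x_1$ and $z_i$ coordinates. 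Transporting this back through the similarity $\mathcal{T}$ of Remark~\ref{rem5} produces a nonzero functional that is a left eigenvector of $\mathcal{A}$ and annihilates $\mathcal{B}$, which is exactly the infinite-dimensional PBH obstruction to exact controllability.

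The main obstacle is making this lifting argument rigorous in the infinite-dimensional setting, since the naive PBH test does not transfer verbatim to unbounded operators and to the notion of exact (rather than merely approximate) controllability. Two technical points need care. First, one must ensure the padded functional genuinely lies in the relevant space and defines a bounded functional — here the assumption that $\mathcal{B}$ is bounded and that $\{\upsilon_k\}$ is a Riesz basis is what guarantees the dual family is well-behaved, so the left eigenvector has a legitimate representation. Second, one must invoke the correct infinite-dimensional characterization: exact controllability of $(\mathcal{A},\mathcal{B})$ implies approximate controllability, and the existence of a left eigenvector of $\mathcal{A}$ orthogonal to the range of $\mathcal{B}$ contradicts approximate controllability. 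I would therefore phrase the contradiction in terms of approximate controllability (a consequence of exact controllability) to keep the spectral argument clean.

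As an alternative, should one prefer to avoid operator-theoretic subtleties, I would argue directly from the block structure: the finite-dimensional truncation obtained by restricting~\eqref{eq:system} to the $x_0$-coordinates is a spectral projection of the original system onto the $\mathcal{A}$-invariant subspace spanned by the leading generalized eigenvectors, and exact controllability of $(\mathcal{A},\mathcal{B})$ is inherited by any such finite-dimensional invariant restriction. The crux in either route is the same, namely that the decoupling in~\eqref{eq:system} lets a controllability defect in the $n_0$-dimensional block survive as a defect of the whole operator; once that inheritance is established the finite-dimensional PBH conclusion is immediate.
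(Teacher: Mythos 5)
Your proposal is correct, and its skeleton is the same as the paper's: argue by contradiction, extract a finite-dimensional PBH/Hautus defect (an eigenvector-type vector annihilated by the input or output matrix), and lift it to the full system by padding with zeros and transporting through the transformation $\mathcal{T}$ of Remark~\ref{rem5}. The difference is the infinite-dimensional ingredient used to reach the contradiction. The paper proves observability directly: it invokes a quantitative Hautus-type resolvent inequality valid for exactly observable systems (Tucsnak--Weiss, applied to the shifted, exponentially stable semigroup of $\mathcal{A}-\lambda I$), and shows the lifted eigenvector makes the left-hand side of that inequality vanish. You instead prove controllability directly and only use the qualitative chain: exact controllability $\Rightarrow$ approximate controllability, and an eigenvector of the adjoint in $\ker\mathcal{B}^\ast$ forces the reachable set from the origin into the proper closed subspace $\{\psi\}^\perp$ (the easy direction of the Fattorini--Hautus test). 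Your route is more elementary and self-contained --- it needs no frequency-domain machinery or citation --- while the paper's route outsources the hard analysis to a standard reference and never has to discuss adjoints. Two points you should still pin down to make your version rigorous: (i) the lifted functional must genuinely be an eigenvector of $\mathcal{A}^\ast$, i.e.\ take $\psi=\mathcal{T}^\ast\tilde w$ with $\tilde w$ the zero-padded left eigenvector, and verify $\mathcal{A}^\ast\psi=\bar s\,\psi$ and $\mathcal{B}^\ast\psi=0$ from $\mathcal{T}\mathcal{A}\mathcal{T}^{-1}$ and $\mathcal{T}\mathcal{B}$ being block-diagonal/block-column (this is where the bounded invertibility of $\mathcal{T}$ enters, rather than a vague appeal to the Riesz dual family); and (ii) the identity $\scal{x(t)}{\psi}=e^{st}\scal{x(0)}{\psi}$ must be justified for mild solutions, which is standard since $T(t)^\ast\psi=e^{\bar s t}\psi$. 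Your alternative "spectral projection inherits controllability" sketch is also fine and is really the same argument in disguise: the spectral projection commutes with the dynamics, so a dense reachable set projects to a dense --- hence, in finite dimension, full --- reachable subspace.
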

\begin{proof} 
    Only the observability proof is provided, since controllability is obtained by duality using the same arguments.\newline
    Let $\lambda>0$ larger than the supremum of the eigenvalues real parts of the operator $\mathcal{A}$. Applying~\cite[Thm~6.5.3]{Tucsnak09} with the exponentially stable shifted semigroup associated to $\mathcal{A}-\lambda I$, if $(\mathcal{C},\mathcal{A})$ is exactly observable, then there exists $k>0$ such that for all $s\in\{\mathbb{C}\,|\,\mathrm{Re}(s)<\lambda\}$,
    \begin{equation}\label{eq:ineq2}
    \frac{1}{|\mathrm{Re}(s)-\lambda|^2} \lVert (sI-\mathcal{A}) x\rVert^2 + \frac{1}{|\mathrm{Re}(s)-\lambda|} \lvert \mathcal{C}x \rvert^2 \geq k \lVert x\rVert^2, 
    \end{equation}
    The proof is then conducted by contradiction. Assume that $(C_0,A_0)$ is not observable. Applying the Hautus lemma, there exists $s\in\mathbb{C}$ and a non-null vector $x_0$ such that $(sI-A_0)x_0=0$ and $C_0x_0=0$. We have $\mathrm{Re}(s)\leq \sigmaU{A_0}<\lambda$. Using now the linear transformation $\mathcal{T}$ from~\eqref{eq:systemref} to~\eqref{eq:system} in Remark~\ref{rem5} and considering $x=\mathcal{T}^{-1} \begin{pmatrix}\begin{bsmallmatrix}x_0\\0\end{bsmallmatrix}\\(0)\end{pmatrix}$, we obtain $$\frac{1}{|\mathrm{Re}(s)|^2} \lVert (sI-\mathcal{A}) x\rVert^2 + \frac{1}{|\mathrm{Re}(s)-\lambda|} \lvert \mathcal{C}x \rvert^2  = \frac{1}{|\mathrm{Re}(s)-\lambda|^2} \left\lVert \mathcal{T}^{-1}\begin{pmatrix}\begin{bsmallmatrix}(sI-A_0)x_0\\0\end{bsmallmatrix}\\(0)\end{pmatrix} \right\rVert^2 + \frac{1}{|\mathrm{Re}(s)-\lambda|}\lvert C_0x_0\rvert^2 = 0.$$
    The inequality~\eqref{eq:ineq2} cannot be satisfied. Thus, the pair $(\mathcal{C},\mathcal{A})$ cannot be observable and the proof is concluded.
\end{proof}

\subsection{Motivating examples}

\subsubsection{ODE-transport interconnection}

Consider an ODE interconnected with the transport equation 
\begin{equation}\label{eq:system1}
\left\{
\begin{aligned}
    \tfrac{\mathrm{d}}{\mathrm{d}t}x(t) &= A x(t) + B z(t,0) + B_u u(t) ,\\
    \partial_{t}z(t,\theta) &= \tfrac{1}{h}\partial_\theta z(t,\theta),\;\forall \theta\in(0,1),\\
    z(t,1) &=  Cx(t),\\
    y(t) &= C_y x(t).
\end{aligned}
\right.
\end{equation}
where matrices $A\in\mathbb{R}^{n_x\times n_x}$, $B\in\mathbb{R}^{n_x\times 1}$, $B_u\in\mathbb{R}^{n_x\times n_u}$, $C\in\mathbb{R}^{1\times n_x}$, $C_y\in\mathbb{R}^{n_y\times n_x}$ and $h>0$.


\begin{lem}
    The generalized characteristic functions of system~\eqref{eq:system1} form a Riesz basis for $\mathcal{H}:=\mathbb{R}^{n_x}\times L_2(0,1)$.
    Moreover, system~\eqref{eq:system1} satisfies Assumption~\ref{assum1}.
\end{lem}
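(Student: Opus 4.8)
The plan is to study the system operator $\mathcal{A}$ on $\mathcal{H}=\mathbb{R}^{n_x}\times L_2(0,1)$ defined by $\mathcal{A}(x,z)=\bigl(Ax+Bz(0),\tfrac{1}{h}z'\bigr)$ on the domain $D(\mathcal{A})=\{(x,z):z\in H^1(0,1),\ z(1)=Cx\}$, and to prove that its generalized eigenfunctions form a Riesz basis of $\mathcal{H}$. First I would check that $\mathcal{A}$ generates a $C_0$-semigroup and, since $D(\mathcal{A})$ embeds compactly into $\mathcal{H}$, that $\mathcal{A}$ has compact resolvent; consequently its spectrum consists solely of isolated eigenvalues of finite algebraic multiplicity.

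Next I would compute the spectrum explicitly. Solving $(sI-\mathcal{A})(x,z)=0$ gives $z(\theta)=z(0)e^{hs\theta}$ together with $z(1)=Cx$ and $sx=Ax+Bz(0)$, so every eigenfunction is determined by its finite-dimensional part through $z(\theta)=Cx\,e^{hs(\theta-1)}$ and $x\propto(sI-A)^{-1}B$. Since $BC$ is rank one, the matrix determinant lemma turns the characteristic equation into the scalar transcendental equation
\begin{equation*}
\det(sI-A)\bigl(1-e^{-hs}C(sI-A)^{-1}B\bigr)=0 ,
\end{equation*}
that is $e^{hs}=C(sI-A)^{-1}B$ away from the spectrum of $A$. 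Because the transfer function $C(sI-A)^{-1}B$ is strictly proper and tends to $0$ as $|s|\to\infty$, the roots split into finitely many exceptional ones and a sequence $\{s_k\}$ whose imaginary parts are asymptotically spaced by $2\pi/h$ and whose real parts tend to $-\infty$ like $-\tfrac{1}{h}\ln|s_k|$; in particular all but finitely many roots are algebraically simple.

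To obtain the Riesz basis I would first isolate the finitely many exceptional (and possibly multiple) eigenvalues by the associated finite-rank spectral projection, splitting $\mathcal{H}$ into a finite-dimensional $\mathcal{A}$-invariant subspace and its complement. On the complement I would prove completeness of the root vectors using a Gohberg--Krein / Keldysh-type completeness theorem for discrete operators, exploiting the resolvent growth estimates that follow from the explicit characteristic equation, and then establish the Riesz basis property by a quadratic-closeness (Bari-type) argument: through the explicit formula above, the high-frequency $L_2$-components $z_k$ are compared with the orthonormal family $\{e^{2\pi\mathrm{i}k\theta}\}_{k\in\mathbb{Z}}$ of $L_2(0,1)$, and the Riesz-basis criterion used in~\cite{Guo01,Gohberg78} is invoked.

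The main obstacle is precisely this last step. Because the effective reflection coefficient $C(sI-A)^{-1}B$ decays, the eigenfunctions concentrate near $\theta=0$ and their $L_2$-components are not uniformly close to a fixed orthonormal Fourier basis; the coupling between the boundary value $z(1)=Cx$ and the ODE state must be tracked carefully, for instance through a suitable bounded and boundedly invertible change of variables on $\mathcal{H}$, before the closeness estimates apply. Once the Riesz basis is established, Assumption~\ref{assum1} follows immediately: the real parts of the eigenvalues tend to $-\infty$, so for every $\delta>0$ only finitely many eigenvalues have real part larger than $-\delta$, and beyond a finite index all eigenvalues are algebraically simple, which is exactly the structure required.
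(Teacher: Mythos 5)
Your preparatory work is correct and in fact reproduces the paper's explicit data: the generator $\mathcal{A}(x,z)=\bigl(Ax+Bz(0),\tfrac{1}{h}z'\bigr)$ with the boundary constraint $z(1)=Cx$, the compact resolvent, the characteristic equation $\det(sI-A)\bigl(1-e^{-hs}C(sI-A)^{-1}B\bigr)=0$, the eigenfunctions $\bigl(\mathrm{adj}(s_kI-A)B,\;C\,\mathrm{adj}(s_kI-A)B\,e^{hs_k(\theta-1)}\bigr)$ --- which is exactly~\eqref{eq:xik1} --- and the retarded-type asymptotics ($\mathrm{Im}\,s_k$ spaced by $2\pi/h$, $\mathrm{Re}\,s_k\sim-\tfrac1h\ln|s_k|$), from which Assumption~\ref{assum1} does follow (the paper gets this part from \cite[Lemma~4.2]{Hale93}). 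The gap is the Riesz basis property itself, i.e.\ the actual content of the lemma: you outline a plan (isolate exceptional eigenvalues, Keldysh-type completeness, Bari quadratic closeness to the Fourier family) and then concede that the closeness step is ``the main obstacle.'' An acknowledged obstacle is not a proof. Note that the paper never attempts this estimate either: it proves the lemma by citation, invoking \cite[Lemma~2.4.6]{Curtain95} to realize~\eqref{eq:system1} as a delay system and \cite[Theorem~2.5.10]{Curtain95} for the basis property, and it reserves the modified Bari theorem \cite[Theorem~6.3]{Guo01} for the reaction-diffusion example, where the eigenfunctions genuinely are quadratically close to a sine basis.

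Moreover, the obstacle you flag is fatal to your chosen route, not a technicality that ``careful tracking'' or a bounded change of variables could remove. Quadratic closeness of the normalized root vectors to \emph{any} orthonormal family would make them a Bessel sequence, and the Bessel property fails here. Indeed, up to phases and up to finite-dimensional components whose relative norm vanishes, the normalized eigenfunctions are the concentrating exponentials $\phi_k(\theta)=\sqrt{2hr_k}\,e^{hs_k\theta}$ with $r_k:=|\mathrm{Re}\,s_k|\sim\tfrac1h\ln|s_k|\to\infty$. Testing against the unit vectors $g_n=\sqrt{n}\,\mathbf{1}_{[0,1/n]}$ gives $|\langle g_n,\phi_k\rangle|\geq c\sqrt{r_k/n}$ for every $k$ with $h|s_k|\leq n$ (since $|e^{z}-1|\geq(3-e)|z|$ for $|z|\leq1$), and summing over these $\sim n/\pi$ indices yields $\sum_k|\langle g_n,\phi_k\rangle|^2\geq c'\ln n\to\infty$. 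Hence no orthonormal family is quadratically close to the normalized eigenfunctions, the plain Bari argument can never apply to the ungrouped family, and in fact no rescaling of the eigenfunctions can satisfy the upper Riesz (Bessel) inequality: this is the structural difference between this retarded spectrum and, say, the parabolic one of~\eqref{eq:system2}. A correct completion must therefore group the eigenvalues into spectral packets and prove a Riesz basis of finite-dimensional spectral subspaces --- which is what the delay-system spectral theory behind the paper's citations provides, and the sense in which both the lemma and the diagonal decomposition~\eqref{eq:system} must then be interpreted --- rather than a Riesz basis of individual normalized eigenvectors.
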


\begin{proof} See~\cite[Lemma~2.4.6]{Curtain95}. Indeed,
    solving the characteristic equation of system~\eqref{eq:system1}, the characteristic functions $(v_k)_{k\in\mathbb{N}}$ associated to $\{s_k\}_{k\in\mathbb{N}}$ are given by
    \begin{equation}\label{eq:xik1}
        v_k(\theta) = \begin{bmatrix} \mathrm{adj}(s_kI-A)B\\ C\mathrm{adj}(s_kI-A)B \exp\left(hs_k(\theta-1)\right)\end{bmatrix}\in\mathbb{C}^{n_x+1}.
    \end{equation}
    According to~\cite[Theorem~2.5.10]{Curtain95}, this sequence of functions $(v_k)_{k\in\mathbb{N}}$ is a Riesz basis in $\mathcal{H}$.     Assumption~\ref{assum1} is fulfilled as shown in~\cite[Lemma~4.2]{Hale93}.
\end{proof}

\subsubsection{ODE-reaction-diffusion interconnection}

Consider an ODE interconnected with the reaction-diffusion equation via Neumann inputs and Dirichlet output
\begin{equation}\label{eq:system2}
\left\{
\begin{aligned}
    \tfrac{\mathrm{d}}{\mathrm{d}t}x(t) &= A x(t) + B\partial_\theta z(t,1) + B_u u(t),\\
    \partial_{t}z(t,\theta) &= (\nu\partial_{\theta\theta}+ \lambda) z(t,\theta),\qquad\forall \theta\in(0,1),\\
    \begin{bsmallmatrix} z(t,0)\\z(t,1)\end{bsmallmatrix} &=  \begin{bsmallmatrix}Cx(t)\\0\end{bsmallmatrix},\\
    y(t) &= C_y x(t).
\end{aligned}
\right.
\end{equation}
where matrices $A\in\mathbb{R}^{n_x\times n_x}$, $B\in\mathbb{R}^{n_x\times 1}$, $B_u\in\mathbb{R}^{n_x\times n_u}$, $C\in\mathbb{R}^{1\times n_x}$, $C_y\in\mathbb{R}^{n_y\times n_x}$, $\nu>0$ and $\lambda\in\mathbb{R}$.

\begin{lem}
    The generalized characteristic functions of system~\eqref{eq:system2} form a Riesz basis for $\mathcal{H}:=\mathbb{R}^{n_x}\times L_2(0,1)$.
    Moreover, system~\eqref{eq:system2} satisfies Assumption~\ref{assum1}.
\end{lem}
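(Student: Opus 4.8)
The plan is to mirror the transport case: solve the spectral problem in closed form, deduce the Riesz basis property from the asymptotic behaviour of the eigenfunctions, and then read off Assumption~\ref{assum1} from the location of the spectrum. Looking for eigenpairs $(s_k,v_k)$ with $v_k=\begin{bsmallmatrix}x_k \\ \phi_k\end{bsmallmatrix}\in\mathbb{C}^{n_x}\times L_2(0,1)$, the eigenvalue equation $\nu\phi_k''+\lambda\phi_k=s_k\phi_k$ together with the boundary data $\phi_k(0)=Cx_k$ and $\phi_k(1)=0$ forces
\begin{equation*}
    \phi_k(\theta)=\frac{\sinh\!\big(\mu_k(1-\theta)\big)}{\sinh(\mu_k)}\,Cx_k,\qquad \mu_k:=\sqrt{(s_k-\lambda)/\nu},
\end{equation*}
so that $\phi_k'(1)=-\tfrac{\mu_k}{\sinh(\mu_k)}Cx_k$. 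Substituting this into the ODE row $s_kx_k=Ax_k+B\phi_k'(1)$ yields, through the matrix determinant lemma (here $BC$ is rank one), the characteristic equation
\begin{equation*}
    \det\!\big(s_kI-A+\tfrac{\mu_k}{\sinh(\mu_k)}BC\big)=\det(s_kI-A)\big(1+\tfrac{\mu_k}{\sinh(\mu_k)}C(s_kI-A)^{-1}B\big)=0,
\end{equation*}
with $x_k$ taken in the associated kernel. This provides the generalized characteristic functions in closed form, analogous to~\eqref{eq:xik1}.

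Next I would analyse the asymptotics of the roots. Writing $\mu_k=i\omega_k$, the second factor reduces on the rightmost branch to $\tfrac{\omega_k}{\sin(\omega_k)}=-1/\big(C(s_kI-A)^{-1}B\big)\sim \nu\omega_k^2/(CB)$, whence $\sin(\omega_k)\to 0$ and $\omega_k=k\pi+O(1/k)$. Consequently $s_k=\lambda-\nu k^2\pi^2+O(1)$, and after normalization the PDE component $\phi_k(\theta)\propto\sin\!\big(\omega_k(1-\theta)\big)$ converges in $L_2(0,1)$ to $\pm\sqrt2\sin(k\pi\theta)$, while the ODE component $x_k\propto(s_kI-A)^{-1}B\,(Cx_k)$ has norm $O(1/k)$. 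Hence $\{v_k\}$ is quadratically close to the orthonormal family $\big\{(0,\sqrt2\sin(k\pi\theta))\big\}_{k\geq1}$, completed by the finitely many low modes attached to the spectrum of $A$, which also account for the factor $\mathbb{R}^{n_x}$. The Riesz basis property then follows from a Bari/quadratic-closeness argument together with completeness of the generalized eigenvectors, exactly as invoked through \cite[Theorem~2.5.10]{Curtain95}.

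The main obstacle is to promote these heuristics to a uniform estimate yielding $\sum_k\lVert v_k-e_k\rVert^2<\infty$ against the reference basis $\{e_k\}$: one must control simultaneously the error $\omega_k-k\pi=O(1/k)$ and the decay $|Cx_k|=O(1/k)$, which is where the boundedness of $B$ and $C$ and the compactness of the resolvent of the decoupled operator enter. A further delicate point is that the eigenvalues carry an $O(1)$ oscillating shift $-2(-1)^kCB$ relative to the Dirichlet values $\lambda-\nu k^2\pi^2$, so the closeness must be argued at the level of the eigenfunctions and not of the eigenvalues. Once the Riesz basis is established, Assumption~\ref{assum1} is immediate: since $\mathrm{Re}(s_k)\to-\infty$ like $-\nu k^2\pi^2$, any vertical line $\mathrm{Re}(s)=-\delta$ leaves only finitely many eigenvalues to its right, and for $k$ large the $s_k$ are real and mutually distinct, hence algebraically simple, in the same spirit as \cite[Lemma~4.2]{Hale93}.
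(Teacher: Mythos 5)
Your proposal is correct and takes essentially the same route as the paper: the paper's proof also rests on the closed-form characteristic functions (its \eqref{eq:xik2}, which is your expression cleared of the $\sinh$ denominator), a Bari-type quadratic-closeness argument (via the modified Bari theorem of \cite[Theorem~6.3]{Guo01}), and the spectral asymptotics $\mathrm{Re}(s_k)\to-\infty$ with eventual algebraic simplicity for Assumption~\ref{assum1} (via \cite[Proposition~1]{Zhao19}). The only difference is packaging: the paper delegates to its references the estimates you sketch (root localization $\omega_k=k\pi+O(1/k)$, quadratic closeness, realness and simplicity of the large eigenvalues), so your write-up is an unpacking of the same argument rather than a genuinely different one.
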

\begin{proof}
See~\cite[Lemma~2]{Bajodek23}. Indeed, solving the characteristic equation of system~\eqref{eq:system1} as in~\cite[Appendix~A]{Bajodek23}, the characteristic functions $(v_k)_{k\in\mathbb{N}}$ associated to $\{s_k\}_{k\in\mathbb{N}}$ are given by
\begin{equation}\label{eq:xik2}
    v_k(\theta) = \begin{bmatrix} \mathrm{adj}(s_kI-A)B \sinh\left(\left(\frac{s_k-\lambda}{\nu}\right)^{\frac{1}{2}}\right)\\ C\mathrm{adj}(s_kI-A)B \sinh\left(\left(\frac{s_k-\lambda}{\nu}\right)^{\frac{1}{2}}(1-\theta)\right)\end{bmatrix} \in\mathbb{C}^{n_x+1}.
\end{equation}
Applying the modified Bari's theorem~\cite[Theorem~6.3]{Guo01}, we show that the sequence of functions $\{v_k\}_{k\in\mathbb{N}}$ forms a Riesz basis for $\mathcal{H}$. Assumption~\ref{assum1} is fulfilled as shown in~\cite[Proposition~1]{Zhao19}.
\end{proof}


\begin{rem}
    Note that Assumption~\ref{assum2} is not discussed for these two motivating examples. Indeed, to prove exact controllability and observability of linear infinite-dimensional systems is a tough task. Few results exist and concern only approximate controllability and observability, as in~\cite[Theorem~4.2.10]{Curtain95} for ODE-transport interconnection. In practice, we will directly check the controllability and observability of the pairs of matrices~$(A_0,B_0)$ and~$(C_0,A_0)$.
\end{rem}

\subsubsection{On the construction of the system data}

For the two above ODE-PDE interconnected systems, a projection into the Riesz basis generated by the generalized characteristic functions $\{v_k\}_{k\in\mathbb{N}}$ allows us to write the system dynamics under the form of \eqref{eq:system}. Indeed, defining $(\xi_k)_{k\in\mathbb{N}}$ as follows
\begin{equation*}
    \xi_k(t) = \int_{0}^1 v_k^\ast(\theta) \begin{bmatrix} x(t)\\z(t,\theta)\end{bmatrix} \mathrm{d}\theta \in\mathbb{C},
\end{equation*}
with $\{v_k^\ast\}_{k\in\mathbb{N}}$ satisfying $\int_0^1 v_k^\ast(\theta) v_i(\theta)\mathrm{d}\theta = \left\{\begin{array}{ll}1& \text{if } k=i,\\0&\text{otherwise,}\end{array}\right.$ we obtain
\begin{equation*}
\left\{
\begin{aligned}
    \dfrac{\mathrm{d}}{\mathrm{d}t}\xi_k(t) &= s_k \xi_k(t) + \int_{0}^1 v_k^\ast(\theta) \begin{bmatrix} B_u\\0\end{bmatrix} \mathrm{d}\theta u(t) ,\\
    y(t) &= \begin{bmatrix} C & 0 \end{bmatrix} \sum_{k\in\mathbb{N}} \xi_k(t) v_k(\theta)
\end{aligned}
\right.
\end{equation*}

Ordering the eigenvalues by decreasing real parts and bearing in mind that in practice the eigenvalues are algebraically simple in most cases, we can compute for a given order $n\in\mathbb{N}$ the following model
\begin{equation}\label{eq:ABC}
\begin{aligned}
\begin{bmatrix} A_0&0\\0&A_1\end{bmatrix}
&=\begin{bsmallmatrix}s_0&0&0\\0&\ddots&0\\0&0&s_n\end{bsmallmatrix}\in\mathbb{C}^{n\times n},\qquad
\begin{bmatrix} B_0\\B_1\end{bmatrix}=\begin{bmatrix}\int_{0}^1 v_1^\ast(\theta) \mathrm{d}\theta\\ \vdots\\\int_{0}^1 v_n^\ast(\theta) \mathrm{d}\theta\end{bmatrix} \begin{bmatrix} B_u\\0\end{bmatrix}\in\mathbb{C}^{n\times n_u},\\
\begin{bmatrix} C_0&C_1 \end{bmatrix} &= \begin{bmatrix} C & 0 \end{bmatrix} \begin{bmatrix}  v_1(\theta) & \cdots & v_n(\theta) \end{bmatrix}\in\mathbb{C}^{n_y\times n}.
\end{aligned}
\end{equation}
and $(a_i),(b_i),(c_i)$ are the remaining terms corresponding to the other eigenvalues.

\begin{rem}Similarly to Remark~\ref{rem5}, the bijective linear transformation from $\begin{bsmallmatrix}x\\z(\theta)\end{bsmallmatrix}$ towards $(\xi_k)$ being continuous, the exponential stability of the two above ODE-PDE interconected systems in terms of $\mathcal{H}:\mathbb{R}^{n_x}\times L_2(0,1)$ norm is equivalent to the one which is regarded in terms of $\ell_2(\mathbb{N})$~norm.
\end{rem}

\begin{rem}
In simulation, the solution is displayed using the relation $$\begin{bmatrix} x(t)\\z(t,\theta)\end{bmatrix} =\sum_{k\in\mathbb{N}} \xi_k(t) v_k(\theta).$$
For computational issues, these infinite-dimensional models are truncated at a sufficiently large order $N\gg n$, which explains the name of high-dimensional systems. 
\end{rem}




\section{Synthesis of the controller}

In this section, we assume that the system operators $(A_0,A_1,B_0,B_1,C_0,C_1)$ are perfectly known.

\subsection{Controller data}

The adopted control strategy consists of designing a state-feedback augmented with a Luenberger observer on the finite dimensional part of the system of dimension $n$ that describes the state variables $x_0$ and $x_1$ from \eqref{eq:system}. More precisely, we consider
\begin{equation}\label{eq:cont1}
\left\{
\begin{aligned}
    \frac{\mathrm{d}}{\mathrm{d}t}\begin{bsmallmatrix}\hat{x}_0(t)\\\hat{x}_1(t)\end{bsmallmatrix} &= \begin{bsmallmatrix} A_0 & 0\\ 0 & A_1 \end{bsmallmatrix} \begin{bsmallmatrix}\hat{x}_0(t)\\\hat{x}_1(t)\end{bsmallmatrix} + \begin{bsmallmatrix}B_0\\B_1\end{bsmallmatrix} u(t) + \begin{bsmallmatrix}G_0\\0\end{bsmallmatrix} (\hat{y}(t)-y(t)) \\
    \hat{y}(t) &= \begin{bsmallmatrix} C_0 & C_1 \end{bsmallmatrix} \begin{bsmallmatrix}\hat{x}_0(t)\\\hat{x}_1(t)\end{bsmallmatrix} , \\
    u(t) &= K_0 {x}_0(t),
\end{aligned}
\right.
\end{equation}
for some suitable control gains $K_0\in\mathbb{R}^{n_u\times n_0}$ and observation gains $G_0\in\mathbb{R}^{n_0\times n_y}$ to be fixed later. Hence, the above finite-dimensional controller can be written as~\eqref{eq:cont} with matrices 
\begin{equation}\label{eq:matrix1}
\begin{aligned}
    L &= \begin{bmatrix}
        A_0+G_0C_0 & G_0C_1\\
        0 & A_1
    \end{bmatrix},\quad
    M = \begin{bmatrix}
        -G_0\\ 0
    \end{bmatrix},\quad
    N = \begin{bmatrix}
        B_0\\ B_1
    \end{bmatrix},\quad
    K = \begin{bmatrix}
        K_0 & 0
    \end{bmatrix}.
\end{aligned}
\end{equation}

For later analyses, we divide the above controller structure~\eqref{eq:cont1} into two parts while including the errors of estimation of the observer.\\
The first part is composed of the reconstructed state $\hat{x}_0\in\mathbb{R}^{n_0}$ and the error $e_0=x_0-\hat{x}_0\in\mathbb{R}^{n_0}$. It follows the dynamics
\begin{equation}\label{eq:dyn1}
    \frac{\mathrm{d}}{\mathrm{d}t} \begin{bmatrix} \hat{x}_0 \\ e_0 \end{bmatrix} = F_0 \begin{bmatrix} \hat{x}_0 \\ e_0 \end{bmatrix} + \begin{bmatrix} -G_0  \\ G_0 \end{bmatrix} C_1e_1 + \begin{bmatrix} -G_0 \\ G_0\end{bmatrix} \sum_{i\in\mathbb{N}}c_iz_i,
\end{equation}
with $F_0=\begin{bmatrix} A_0+B_0K_0 & -G_0C_0 \\ 0 & A_0 + G_0C_0 \end{bmatrix}$.\\
The second part is composed of the reconstructed state $\hat{x}_1\in\mathbb{R}^{n-n_0}$ and the error $e_1=x_1-\hat{x}_1\in\mathbb{R}^{n-n_0}$, it follows the dynamics
\begin{equation}\label{eq:dyn2}
    \frac{\mathrm{d}}{\mathrm{d}t} \begin{bmatrix} \hat{x}_1 \\ e_1 \end{bmatrix} = \begin{bmatrix} A_1 & 0 \\ 0 & A_1 \end{bmatrix} \begin{bmatrix} \hat{x}_1 \\ e_1 \end{bmatrix} + \begin{bmatrix} B_1K_0\\0\end{bmatrix} \hat{x}_0.
\end{equation}\newline

Under Assumption~\ref{assum2}, the pair $(A_0,B_0)$ is stabilizable and the pair $(C_0,A_0)$ is detectable. Therefore, for any $\delta>0$, the gains $(K_0,G_0)$ can be selected such that there exists a symmetric positive matrix $P_0\in\mathbb{R}^{n_0\times n_0}$ which satisfy the Lyapunov inequality
\begin{equation}\label{eq:stab0}
    P_0 F_0 + F_0^\ast P_0 \prec -2\delta P_0.
\end{equation}

Under Assumption~\ref{assum1}, the remaining system is inherently exponentially stable with a decay rate $\delta$. Therefore, for any fixed integer $n$, there exists a symmetric positive definite matrix $P_1\in\mathbb{R}^{n-n_0\times n-n_0}$ such that the following Lyapunov inequality holds
\begin{equation}\label{eq:stab1}
    P_1A_1 + A_1^\ast P_1 \prec -2\delta P_1.
\end{equation}

\subsection{Lyapunov functional}

For some scalars $\alpha,\beta,\gamma>0$, consider the following Lyapunov functional
\begin{equation}\label{eq:lyap}
    \mathcal{V}(\hat{x}_0,e_0,\hat{x}_1,e_1,z_i) = \underbrace{\alpha \begin{bmatrix} \hat{x}_0 \\ e_0 \end{bmatrix}^\ast \!\!P_0\begin{bmatrix} \hat{x}_0 \\ e_0 \end{bmatrix}}_{=\mathcal{V}_0(\hat{x}_0,e_0)} + \underbrace{\begin{bmatrix} \hat{x}_1 \\ e_1 \end{bmatrix}^\ast \begin{bmatrix}
        \beta P_1 & 0 \\ 0 & \gamma P_1
    \end{bmatrix}\begin{bmatrix} \hat{x}_1 \\ e_1 \end{bmatrix}}_{=\mathcal{V}_1(\hat{x}_1,e_1)} + \underbrace{\lVert (z_i)\rVert^2}_{=\mathcal{V}_2(z_i)},
\end{equation}
where the matrices $P_0,P_1$ are fixed in accordance with~\eqref{eq:stab0}-\eqref{eq:stab1}.\\

Computing  the time derivative of this functional along the trajectories of the closed-loop system~\eqref{eq:system}-\eqref{eq:cont} yields
\begin{equation*}
\begin{aligned}
    \frac{\mathrm{d}}{\mathrm{d}t}\mathcal{V}_0(\hat{x}_0,e_0) &= 2\alpha  \begin{bmatrix} \hat{x}_0 \\ e_0 \end{bmatrix}^\ast \!\! P_0 \left( F_0 \begin{bmatrix} \hat{x}_0 \\ e_0 \end{bmatrix} + \begin{bmatrix} -G_0 \\ G_0 \end{bmatrix} C_1e_1 + \begin{bmatrix} -G_0 \\ G_0\end{bmatrix} \sum_{i\in\mathbb{N}}c_iz_i\right),\\
    \frac{\mathrm{d}}{\mathrm{d}t}\mathcal{V}_1(\hat{x}_1,e_1) &= 2\beta \hat{x}_1^\ast P_1 \left( A_1 \hat{x}_1 + B_1K_0\hat{x}_0 \right) + 2\gamma e_1^\ast P_1A_1 e_1,\\
    \frac{\mathrm{d}}{\mathrm{d}t}\mathcal{V}_2(z_i) &= 2 \sum_{i\in\mathbb{N}}\mathrm{Re}(a_i)|z_i|^2 + 2\sum_{i\in\mathbb{N}}z_i^\ast b_i K_0\hat{x}_0.
\end{aligned}
\end{equation*}
Using Jordan's sorted form $\mathrm{Re}(a_i)\leq \sigmaD{A_1}<0$ and applying the Lyapunov inequalities~\eqref{eq:stab0}-\eqref{eq:stab1} leads to
\begin{equation*}
\begin{aligned}
    \frac{\mathrm{d}}{\mathrm{d}t}\mathcal{V}_0(\hat{x}_0,e_0) &\leq -2\alpha\delta  \begin{bmatrix} \hat{x}_0 \\ e_0 \end{bmatrix}^\ast \!\! P_0 \begin{bmatrix} \hat{x}_0 \\ e_0 \end{bmatrix} +  2\alpha  \begin{bmatrix} \hat{x}_0 \\ e_0 \end{bmatrix}^\ast \!\! P_0 \begin{bmatrix} -G_0 \\ G_0 \end{bmatrix} \left(C_1 e_1 +   \sum_{i\in\mathbb{N}}c_iz_i\right),\\
    \frac{\mathrm{d}}{\mathrm{d}t}\mathcal{V}_1(\hat{x}_1,e_1) &\leq -2\delta\begin{bmatrix} \hat{x}_1 \\ e_1 \end{bmatrix}^\ast \begin{bmatrix}
        \beta P_1 & 0 \\ 0 & \gamma P_1
    \end{bmatrix} \begin{bmatrix} \hat{x}_1 \\ e_1 \end{bmatrix} + 2\beta \hat{x}_1^\ast P_1B_1K_0\hat{x}_0,\\
    \frac{\mathrm{d}}{\mathrm{d}t}\mathcal{V}_2(z_i) &\leq -2|\sigmaD{A_1}| \lVert (z_i)\rVert^2 + 2\sum_{i\in\mathbb{N}}z_i^\ast b_i K_0\hat{x}_0.
\end{aligned}
\end{equation*}
where the scalar $\delta$ satisfies~\eqref{eq:stab1}.
The three quadratic terms are all negative. 
The three crossed terms are then distributed on the quadratic terms. 
Applying Young's inequality gives rise to the upper bounds
\begin{align*}
    2\begin{bmatrix} \hat{x}_0 \\ e_0 \end{bmatrix}^\ast \!\!P_0\begin{bmatrix} -G_0 \\ G_0 \end{bmatrix} \left(C_1e_1 +\sum_{i\in\mathbb{N}} c_iz_i\right) &\leq 
    \frac{\delta}{2} 
    \begin{bmatrix} \hat{x}_0 \\ e_0 \end{bmatrix}^\ast \!\!P_0 \begin{bmatrix} \hat{x}_0 \\ e_0 \end{bmatrix} 
    + \frac{4}{\delta} 
    \left\lvert P_0^{\frac{1}{2}}\begin{bmatrix} -G_0 \\ G_0 \end{bmatrix}C_1e_1 \right\rvert^2
    + \frac{4}{\delta} 
    \left\lvert P_0^{\frac{1}{2}}\begin{bmatrix} -G_0 \\ G_0 \end{bmatrix} \sum_{i\in\mathbb{N}}c_iz_i \right\rvert^2,\\
    2 \hat{x}_1^\ast P_1B_1K_0 \hat{x}_0 &\leq \delta \hat{x}_1^\ast P_1 \hat{x}_1 + \frac{1}{\delta}\left\lvert P_1^{\frac{1}{2}}B_1K_0\hat{x}_0\right\rvert^2,\\
    2\sum_{i\in\mathbb{N}}z_i^\ast b_iK_0\hat{x}_0 &\leq (|\sigmaD{A_1}|-\delta) \lVert (z_i)\rVert^2 + \frac{1}{|\sigmaD{A_1}|-\delta} \sum_{i\in\mathbb{N}}  |b_iK_0\hat{x}_0|^2
\end{align*}
for any $\delta$ such that $|\sigmaD{A_1}|-\delta>0$ and $\delta>0$, as it happens the scalar $\delta$ satisfying~\eqref{eq:stab1}.
By collecting the above inequalities, the Lyapunov derivatives~$\frac{\mathrm{d}\mathcal{V}}{\mathrm{d}t}$ is bounded by
\begin{equation}\label{eq:calculation}
\begin{aligned}
    \frac{\mathrm{d}}{\mathrm{d}t}\mathcal{V}(\hat{x}_0,e_0,\hat{x}_1,e_1,z_i) \leq\;&  
    -\frac{3}{2}\alpha\delta 
    \begin{bmatrix} \hat{x}_0 \\ e_0 \end{bmatrix}^\ast\!\! P_0 \begin{bmatrix} \hat{x}_0 \\ e_0 \end{bmatrix} + \left(\frac{\beta}{\delta} \sigmaU{K_0^\ast B_1^\ast P_1 B_1K_0} + \frac{1}{|\sigmaD{A_1}|-\delta}\sum_{i\in\mathbb{N}}\sigmaU{K_0^\ast b_i^\ast b_iK_0}\right) \lvert \hat{x}_0\rvert^2 \\
    & -\delta\begin{bmatrix} \hat{x}_1 \\ e_1 \end{bmatrix}^\ast \begin{bmatrix}
        \beta P_1 & 0 \\ 0 & 2\gamma P_1
    \end{bmatrix} \begin{bmatrix} \hat{x}_1 \\ e_1 \end{bmatrix} + 4\frac{\alpha}{\delta} 
    \sigmaU{C_1^\ast \begin{bsmallmatrix}-G_0\\G_0\end{bsmallmatrix}^\ast P_0\begin{bsmallmatrix}-G_0\\G_0\end{bsmallmatrix} C_1} \lvert e_1 \rvert^2\\
    & + \left(-\delta-|\sigmaD{A_1}| + 4\frac{\alpha}{\delta}
   \sum_{i\in\mathbb{N}}\sigmaU{c_i^\ast \begin{bsmallmatrix}-G_0\\G_0\end{bsmallmatrix}^\ast P_0\begin{bsmallmatrix}-G_0\\G_0\end{bsmallmatrix} c_i}\right) \lVert (z_i)\rVert^2.
\end{aligned}
\end{equation}

In the next paragraph, these calculations are used to demonstrate stability through the Lyapunov theorem. The weighting $\alpha,\beta,\gamma$ are selected in a way to obtain a negative upper bound and satisfy $\frac{\mathrm{d}\mathcal{V}}{\mathrm{d}t}\leq-\delta \mathcal{V}$.

\subsection{Stability analysis}

In this subsection, we prove that our finite-dimensional controller~\eqref{eq:matrix1} stabilizes system~\eqref{eq:system}. More precisely, based on the previous Lyapunov functional, if the ratio between the upper bound of the input and output operators and the square of the lower bound of the state operator is bounded, then the closed-loop system~\eqref{eq:system}-\eqref{eq:cont} is exponentially stable.
\begin{thm}\label{thm1}
    Let $\delta>0$.
    Under Assumptions~\ref{assum1} and~\ref{assum2}, if there exists an integer $n\geq n_0+n_1$ such that 
    \begin{equation}\label{eq:order1}
        \varrho_n:=\frac{16\sum_{i\in\mathbb{N}}\sigmaU{K_0^\ast b_i^\ast b_iK_0}\sum_{i\in\mathbb{N}}\sigmaU{c_i^\ast \begin{bsmallmatrix}-G_0\\G_0\end{bsmallmatrix}^\ast P_0\begin{bsmallmatrix}-G_0\\G_0\end{bsmallmatrix} c_i}}{\delta^2\sigmaD{P_0}(|\sigmaD{A_1}|-\delta)|\sigmaD{A_1}|} \leq 1,
    \end{equation}
    where $K_0,G_0,P_0$ are given by~\eqref{eq:stab0},
    then, the closed-loop system~\eqref{eq:system}-\eqref{eq:cont} with $(K,L,M,N)$ in~\eqref{eq:matrix1} is exponentially stable with decay rate $\delta$.
\end{thm}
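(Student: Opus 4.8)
The plan is to pick the weights $\alpha,\beta,\gamma>0$ of the Lyapunov functional~\eqref{eq:lyap} so that the derivative estimate~\eqref{eq:calculation} upgrades to $\tfrac{\mathrm{d}}{\mathrm{d}t}\mathcal{V}\leq-\delta\mathcal{V}$, and then to conclude with the Lyapunov theorem and the two-sided coercivity of $\mathcal{V}$. Concretely, I would add $\delta\mathcal{V}$ to the right-hand side of~\eqref{eq:calculation}: the $P_0$-quadratic form keeps a surplus $-\tfrac12\alpha\delta$ times $\begin{bsmallmatrix}\hat x_0\\e_0\end{bsmallmatrix}^\ast P_0\begin{bsmallmatrix}\hat x_0\\e_0\end{bsmallmatrix}$, the $\hat x_1$-term is exactly cancelled, the $e_1$-term retains $-\delta\gamma\, e_1^\ast P_1 e_1$, and the $\lVert(z_i)\rVert^2$-term loses its $-\delta$ contribution. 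Bounding these three negative quadratic forms from above through $P_0\succeq\sigmaD{P_0}I$ and $P_1\succeq\sigmaD{P_1}I$ by their diagonal $\lvert\hat x_0\rvert^2$, $\lvert e_1\rvert^2$ and $\lVert(z_i)\rVert^2$ parts, it remains to make three scalar coefficients nonpositive.

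These three conditions read
\begin{align*}
\tfrac12\alpha\delta\,\sigmaD{P_0} &\geq \tfrac{\beta}{\delta}\sigmaU{K_0^\ast B_1^\ast P_1 B_1K_0} + \tfrac{1}{|\sigmaD{A_1}|-\delta}\sum_{i\in\mathbb{N}}\sigmaU{K_0^\ast b_i^\ast b_iK_0}, \\
\delta\gamma\,\sigmaD{P_1} &\geq \tfrac{4\alpha}{\delta}\sigmaU{C_1^\ast \begin{bsmallmatrix}-G_0\\G_0\end{bsmallmatrix}^\ast P_0\begin{bsmallmatrix}-G_0\\G_0\end{bsmallmatrix}C_1}, \\
|\sigmaD{A_1}| &\geq \tfrac{4\alpha}{\delta}\sum_{i\in\mathbb{N}}\sigmaU{c_i^\ast \begin{bsmallmatrix}-G_0\\G_0\end{bsmallmatrix}^\ast P_0\begin{bsmallmatrix}-G_0\\G_0\end{bsmallmatrix}c_i},
\end{align*}
and the heart of the argument — and the step I expect to be the main obstacle — is showing they are simultaneously feasible exactly under~\eqref{eq:order1}. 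The second inequality only bounds $\gamma$ from below and is harmless, but the first forces $\alpha$ to be large while the third forces it small, so the window for $\alpha$ is nonempty only if the product of the two series $\sum_i\sigmaU{K_0^\ast b_i^\ast b_iK_0}$ and $\sum_i\sigmaU{c_i^\ast[\cdots]c_i}$ is small enough. Letting $\beta\to0^+$, the first and third are compatible precisely when $\tfrac{8\sum_i\sigmaU{K_0^\ast b_i^\ast b_iK_0}\sum_i\sigmaU{c_i^\ast[\cdots]c_i}}{\delta^2\sigmaD{P_0}(|\sigmaD{A_1}|-\delta)|\sigmaD{A_1}|}<1$, that is when $\tfrac12\varrho_n<1$. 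Since~\eqref{eq:order1} gives $\varrho_n\leq1$, this quantity is at most $\tfrac12$, leaving a strict margin; I would therefore fix $\beta>0$ small, then $\alpha$ inside the resulting nonempty window, and finally $\gamma$ large enough. The finiteness of both series, needed for $\varrho_n$ to be well defined, follows from the boundedness of $\mathcal{B}$ and $\mathcal{C}$.

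With the weights so chosen, $\tfrac{\mathrm{d}}{\mathrm{d}t}\mathcal{V}\leq-\delta\mathcal{V}$ holds along the closed-loop trajectories. Since $P_0,P_1\succ0$, the functional is coercive, $\varepsilon_1(\lvert\hat x_0\rvert^2+\lvert e_0\rvert^2+\lvert\hat x_1\rvert^2+\lvert e_1\rvert^2+\lVert(z_i)\rVert^2)\leq\mathcal{V}\leq\varepsilon_2(\cdots)$ with $\varepsilon_1=\min\{\alpha\sigmaD{P_0},\beta\sigmaD{P_1},\gamma\sigmaD{P_1},1\}$ and $\varepsilon_2=\max\{\alpha\sigmaU{P_0},\beta\sigmaU{P_1},\gamma\sigmaU{P_1},1\}$, so the Lyapunov/Grönwall comparison yields $\mathcal{V}(t)\leq e^{-\delta t}\mathcal{V}(0)$ and hence exponential decay at rate $\delta$ with $\kappa=\varepsilon_2/\varepsilon_1$. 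Finally, because $x_0=\hat x_0+e_0$, $x_1=\hat x_1+e_1$ and $\hat x=(\hat x_0,\hat x_1)$ define a linear bijective change of variables with equivalent norms, and by the equivalence noted in Remark~\ref{rem5}, this establishes~\eqref{eq:defstab} for the original system~\eqref{eq:system}--\eqref{eq:cont}. A secondary technical point to address is the rigorous justification of the Lyapunov argument in infinite dimensions, i.e. that the computed derivative is a valid Dini derivative along the (mild) solutions of the well-posed closed-loop system.
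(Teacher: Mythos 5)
Your proof is correct and follows essentially the same route as the paper's: the Lyapunov functional \eqref{eq:lyap}, the pre-derived bound \eqref{eq:calculation}, a choice of weights $\alpha,\beta,\gamma$ forcing $\tfrac{\mathrm{d}}{\mathrm{d}t}\mathcal{V}\leq-\delta\mathcal{V}$, and the Gr\"onwall comparison plus change of variables. The only difference is presentational: the paper exhibits explicit weights and verifies the same three sign conditions you derive, whereas you prove those conditions are feasible via a window argument for $\alpha$ with $\beta$ small and $\gamma$ large --- which incidentally shows the constant $16$ in \eqref{eq:order1} carries a factor-of-two slack, consistent with the paper's remark that $\alpha,\gamma$ can be taken larger and $\beta$ smaller.
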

\begin{proof}
    Take the Lyapunov functional~$\mathcal{V}$ defined by~\eqref{eq:lyap} with the weights 
    \begin{equation*}
    \alpha \!=\! 
    \frac{4}{\delta} 
    \frac{\sum_{i\in\mathbb{N}}\sigmaU{K_0^\ast b_i^\ast b_iK_0}}{\sigmaD{P_0}(|\sigmaD{A_1}|-\delta)},\quad
    \beta \!=\! \frac{\delta\sum_{i\in\mathbb{N}}\sigmaU{K_0^\ast b_i^\ast b_iK_0}}{(|\sigmaD{A_1}|-\delta)\sigmaU{K_0^\ast B_1^\ast P_1B_1K_0}},\quad
    \gamma \!=\! 
    \frac{4\alpha \sigmaU{C_1^\ast \begin{bsmallmatrix}-G_0\\G_0\end{bsmallmatrix}^\ast P_0\begin{bsmallmatrix}-G_0\\G_0\end{bsmallmatrix} C_1}}{\delta^2 \sigmaD{P_1}}, 
    \end{equation*}
    where the series introduced here exist by boundedness of operators $\mathcal{B}$ and $\mathcal{C}$.
    This Lyapunov functional is framed by
    \begin{equation*}
        \mathrm{min}(\alpha\sigmaD{P_0},\beta\sigmaD{P_1},\gamma\sigmaD{P_1},1) 
        \leq \frac{\mathcal{V}(\hat{x}_0,e_0,\hat{x}_1,e_1,z_i)}{|\hat{x}_0|^2+|e_0|^2+|\hat{x}_1|^2+|e_1|^2+\lVert (z_i)\rVert^2} 
        \leq \alpha\sigmaU{P_0}+\beta\sigmaU{P_1}+\gamma\sigmaU{P_1}+1.
    \end{equation*}
    Under Assumptions~\ref{assum1} and~\ref{assum2}, the inequality~\eqref{eq:calculation} with the scalars $(\alpha,\beta,\gamma)$ above rewrites as follows
    \begin{equation*}
    \begin{aligned}
    \frac{\mathrm{d}}{\mathrm{d}t}\mathcal{V}(\hat{x}_0,e_0,\hat{x}_1,e_1,z_i) &\leq 
    -\delta \underbrace{\alpha
    \begin{bmatrix} \hat{x}_0 \\ e_0 \end{bmatrix}^\ast\!\! P_0 \begin{bmatrix} \hat{x}_0 \\ e_0 \end{bmatrix}}_{\mathcal{V}_0(\hat{x}_0,e_0)} + \left( -\alpha \frac{\delta \sigmaD{P_0}}{2} + 2\frac{\sum_{i\in\mathbb{N}}\sigmaU{K_0^\ast b_i^\ast b_iK_0}}{|\sigmaD{A_1}|-\delta}\right) \lvert \hat{x}_0\rvert^2 \\
    &\quad -\delta \underbrace{\begin{bmatrix} \hat{x}_1 \\ e_1 \end{bmatrix}^\ast \begin{bmatrix}
        \beta P_1 & 0 \\ 0 & \gamma P_1
    \end{bmatrix} \begin{bmatrix} \hat{x}_1 \\ e_1 \end{bmatrix} }_{\mathcal{V}_1(\hat{x}_1,e_1)} + \left( -\gamma\delta\sigmaD{P_1} + 4\frac{\alpha}{\delta} 
    \sigmaU{C_1^\ast \begin{bsmallmatrix}-G_0\\G_0\end{bsmallmatrix}^\ast P_0\begin{bsmallmatrix}-G_0\\G_0\end{bsmallmatrix} C_1} \right)\lvert e_1 \rvert^2\\
    &\quad + -\delta \underbrace{\lVert (z_i)\rVert^2}_{\mathcal{V}_2(z_i)}+\left(-|\sigmaD{A_1}| + 4\frac{\alpha}{\delta}
   \sum_{i\in\mathbb{N}}\sigmaU{c_i^\ast \begin{bsmallmatrix}-G_0\\G_0\end{bsmallmatrix}^\ast P_0\begin{bsmallmatrix}-G_0\\G_0\end{bsmallmatrix} c_i}\right) \lVert (z_i)\rVert^2,\\
    &\leq
        -\delta \mathcal{V}(\hat{x}_0,e_0,\hat{x}_1,e_1,z_i) 
        - (1-\varrho_n)|\sigmaD{A_1}| \lVert (z_i)\rVert^2.
        \end{aligned}
    \end{equation*}
    Note that $\alpha$ and $\gamma$ can also be selected larger and that $\beta$ can be selected smaller.
    Then, from inequality~\eqref{eq:order1}, we can apply the Lyapunov theorem and obtain the exponential convergence of the state $(\hat{x}_0,e_0,\hat{x}_1,e_1,z_i)$ with the decay rate~$\delta$.
    At the price of a change of variable, we prove that there exists $\kappa\geq 1$ such that
    \begin{equation*}
        |x(t)|^2 + |\hat{x}(t)|^2+ \lVert z_i(t)\rVert^2 \leq \kappa \exp(-\delta t) \left( |x(0)|^2 + |\hat{x}(0)|^2+ \lVert z_i(0)\rVert^2 \right)
    \end{equation*}
    and conclude on the exponential convergence toward the equilibrium with a decay rate of $\delta$.
\end{proof}

An interpretation of the limitations imposed by the inequality~\eqref{eq:order1} is proposed and discussed in the following.

\begin{cor}\label{cor1}
    Let $\delta>0$. Under Assumptions~\ref{assum1} and~\ref{assum2}, if the sorted eigenvalues $\{\lambda_i\}_{i\in\mathbb{N}}$ of the operator $\mathcal{A}$ satisfy $\mathrm{Re}(\lambda_i)\underset{i\to\infty}{\longrightarrow}-\infty$, then there exists a sufficiently large order $n$ such that the closed-loop system is exponentially stable with a decay rate $\delta$.
\end{cor}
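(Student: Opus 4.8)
The plan is to show that the hypotheses force the ratio $\varrho_n$ of~\eqref{eq:order1} to tend to $0$ as $n\to\infty$, so that~\eqref{eq:order1} holds for $n$ large enough and Theorem~\ref{thm1} applies verbatim. The first point to record is that the unstable block $(A_0,B_0,C_0)$ is fixed once $\delta$ is chosen, since its size $n_0$ is determined by Assumption~\ref{assum1} and does not grow with $n$. Hence the gains $K_0,G_0$ and the matrix $P_0$ solving~\eqref{eq:stab0}, as well as $\sigmaD{P_0}$, may be frozen once and for all, and the only $n$-dependence of $\varrho_n$ comes from the tail summations built on $(b_i)$, $(c_i)$ and from $\sigmaD{A_1}$.

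I would then bound the numerator. Boundedness of $\mathcal{B}$ and $\mathcal{C}$ translates, in the Riesz coordinates, into the square-summability $\sum_i |b_i|^2<\infty$ and $\sum_i|c_i|^2<\infty$. Using the rank-one estimates $\sigmaU{K_0^\ast b_i^\ast b_iK_0}\leq \sigmaU{K_0^\ast K_0}\,|b_i|^2$ and $\sigmaU{c_i^\ast \begin{bsmallmatrix}-G_0\\G_0\end{bsmallmatrix}^\ast P_0\begin{bsmallmatrix}-G_0\\G_0\end{bsmallmatrix}c_i}\leq \sigmaU{\begin{bsmallmatrix}-G_0\\G_0\end{bsmallmatrix}^\ast P_0\begin{bsmallmatrix}-G_0\\G_0\end{bsmallmatrix}}\,|c_i|^2$, whose constants do not depend on $n$, the two factors in the numerator of $\varrho_n$ are tails of convergent series — indexed by the modes relegated to the remainder $(z_i)$ when the order is $n$ — and therefore vanish as $n\to\infty$.

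Finally I would keep the denominator away from zero. This is where the assumption $\mathrm{Re}(\lambda_i)\to-\infty$ is used: as $n$ increases the block $A_1$ absorbs modes of increasingly negative real part, so $\sigmaD{A_1}\to-\infty$, that is $|\sigmaD{A_1}|\to\infty$, while in any case $|\sigmaD{A_1}|-\delta$ stays bounded below by a positive constant because $\sigmaU{A_1}<-\delta$. Since $\delta^2\sigmaD{P_0}>0$ is fixed, the denominator $\delta^2\sigmaD{P_0}(|\sigmaD{A_1}|-\delta)|\sigmaD{A_1}|$ diverges, and combined with the previous step this yields $\varrho_n\to 0$. Thus some $n$ satisfies $\varrho_n\leq 1$ and Theorem~\ref{thm1} delivers exponential stability with decay rate $\delta$.

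The step I expect to be the main obstacle is the summability claim of the second paragraph: one must argue carefully, using the Riesz basis structure, that the boundedness of the input and output operators indeed yields $\sum_i|b_i|^2<\infty$ and $\sum_i|c_i|^2<\infty$, and that the summations appearing in $\varrho_n$ are genuine tails of these series — which relies both on the modes being ordered by decreasing real part and on $K_0,G_0,P_0$ remaining fixed as $n$ varies.
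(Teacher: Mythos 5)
Your proposal is correct and follows essentially the same route as the paper: freeze $K_0,G_0,P_0$ at the order $n_0$, use boundedness of $\mathcal{B}$ and $\mathcal{C}$ to control the numerator of $\varrho_n$, and let the denominator diverge via $|\mathrm{Re}(\lambda_n)|\to\infty$ so that $\varrho_n\leq 1$ for $n$ large, then invoke Theorem~\ref{thm1}. The only difference is cosmetic: the paper merely bounds the numerator by a constant $k$ independent of $n$, whereas you additionally observe that it is a tail of a convergent series and hence vanishes — a slight strengthening that the argument does not require.
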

\begin{proof}
    Fix matrices $P_0,K_0,G_0$ with respect to the order $n_0$, independently of~$n$. 
    The boundedness of the operators $\mathcal{B}$ and $\mathcal{C}$ guarantees the existence of a scalar $k>0$ independent of~$n$ such that
    $$\varrho_n=\frac{16\sum_{i\in\mathbb{N}}\sigmaU{K_0^\ast b_i^\ast b_iK_0}\sum_{i\in\mathbb{N}}\sigmaU{c_i^\ast \begin{bsmallmatrix}-G_0\\G_0\end{bsmallmatrix}^\ast P_0\begin{bsmallmatrix}-G_0\\G_0\end{bsmallmatrix} c_i}}{\delta^2\sigmaD{P_0}|\mathrm{Re}(\lambda_n)|\left(|\mathrm{Re}(\lambda_n)|-\delta\right)}\leq \frac{k}{|\mathrm{Re}(\lambda_n)|\left(|\mathrm{Re}(\lambda_n)|-\delta\right)}.$$ Setting the order $n$ sufficiently large order $n$ yields $\varrho_n\leq 1$ and Theorem~\ref{thm1} concludes the proof.
\end{proof}

\begin{rem}
    Note that inequality~\eqref{eq:order1} alleviates the asymptotic condition on the eigenvalues of the operator~$\mathcal{A}$.
    Indeed, it suffices to have a ratio between the upper bound of the input and output operators and the square of the lower bound of the state operator which is smaller than one. This trade-off between input-output gain and decay rate of the remaining part is not totally new, remembering that we neglect fast in front of slow dynamics in perturbation theory.
\end{rem}

 \section{Synthesis of the controller with uncertainties}

In this section, the state, input and output operators $(A_0,A_1,B_0,B_1,C_0,C_1)$ are subject to uncertainties and the matrices~$(\hat{A}_0,\hat{A}_1,\hat{B}_0,\hat{B}_1,\hat{C}_0,\hat{C}_1)$ are chosen so that the pair~$(\hat{A}_0,\hat{B}_0)$ is controllable and the pair~$(\hat{C}_0,\hat{A}_0)$ is observable. It will serve as our knowledge model. 

\begin{rem}
    There are many reasons for this extension to the uncertain case. Indeed, as soon as the eigenvalues are known to within one error, we find ourselves in this configuration. This is often the case when we need to solve the characteristic equation numerically. Moreover, we can also imagine dealing with robustness problems such as the case of a reaction-diffusion equation with a constant but uncertain reaction coefficient. In the longer term, it would also be worth looking at the question of uncertain eigenstructures.
\end{rem}

\subsection{Controller data}

In presence of uncertainties, the same control strategy is adopted by replacing the exact model with the approximate model. More precisely, we consider
\begin{equation}\label{eq:cont2}
\left\{
\begin{aligned}
    \frac{\mathrm{d}}{\mathrm{d}t}\begin{bsmallmatrix}\hat{x}_0(t)\\\hat{x}_1(t)\end{bsmallmatrix} &= \begin{bsmallmatrix} \hat{A}_0 & 0\\ 0 & \hat{A}_1 \end{bsmallmatrix} \begin{bsmallmatrix}\hat{x}_0(t)\\\hat{x}_1(t)\end{bsmallmatrix} + \begin{bsmallmatrix}\hat{B}_0\\\hat{B}_1\end{bsmallmatrix} u(t) + \begin{bsmallmatrix}G_0\\0\end{bsmallmatrix} (\hat{y}(t)-y(t)) \\
    \hat{y}(t) &= \begin{bsmallmatrix} \hat{C}_0 & \hat{C}_1 \end{bsmallmatrix} \begin{bsmallmatrix}\hat{x}_0(t)\\\hat{x}_1(t)\end{bsmallmatrix} , \\
    u(t) &= K_0 {x}_0(t),
\end{aligned}
\right.
\end{equation}
for some suitable control gains $K_0\in\mathbb{R}^{n_u\times n_0}$ and observation gains $G_0\in\mathbb{R}^{n_0\times n_y}$ to be fixed later. Hence, the above finite-dimensional controller can be written as~\eqref{eq:cont} with matrices 
\begin{equation}\label{eq:matrix2}
\begin{aligned}
    L &= \begin{bmatrix}
        \hat{A}_0+G_0\hat{C}_0 & G_0\hat{C}_1\\
        0 & \hat{A}_1
    \end{bmatrix},\quad
    M = \begin{bmatrix}
        -G_0\\ 0
    \end{bmatrix},\quad
    N = \begin{bmatrix}
        \hat{B}_0\\ \hat{B}_1
    \end{bmatrix},\quad
    K = \begin{bmatrix}
        K_0 & 0
    \end{bmatrix}.
\end{aligned}
\end{equation}

Here, the dynamics of the reconstructed state $(\hat{x}_0,\hat{x}_1)$ and the estimation error state $(e_0,e_1)=(x_0-\hat{x}_0,x_1-\hat{x}_1)$ of the observer~\eqref{eq:cont2} satisfy
\begin{equation}\label{eq:systemCL}
\left\{
\begin{aligned}
    \frac{\mathrm{d}}{\mathrm{d}t}\begin{bmatrix}\hat{x}_0\\e_0\end{bmatrix} &= (\hat{F}_0+\tilde{F}_0)\begin{bmatrix}\hat{x}_0\\e_0\end{bmatrix} + \begin{bmatrix} -G_0  \\ G_0 \end{bmatrix}C_1e_1 + \begin{bmatrix} -G_0 \\ G_0 \end{bmatrix} \tilde{C}_1\hat{x}_1 + \begin{bmatrix} -G_0 \\ G_0 \end{bmatrix} \sum_{i\in\mathbb{N}}c_iz_i.,\\
    \frac{\mathrm{d}}{\mathrm{d}t}\begin{bmatrix}\hat{x}_1\\e_1\end{bmatrix} &= \left(\begin{bmatrix}A_1&0\\0&A_1\end{bmatrix} + \begin{bmatrix}-\tilde{A}_1&0\\\tilde{A}_1&0\end{bmatrix}\right)\begin{bmatrix}\hat{x}_1\\e_1\end{bmatrix} + \begin{bmatrix}\hat{B}_1K_0\\\tilde{B}_1K_0\end{bmatrix}\hat{x}_0,
\end{aligned}
\right.
\end{equation}
where
\begin{equation*}
\begin{aligned}
    \hat{F}_0 &= \begin{bmatrix} \hat{A}_0\!+\!\hat{B}_0K_0 & -G_0\hat{C}_0 \\ 0 & \hat{A}_0 \!+\! G_0\hat{C}_0 \end{bmatrix}, \qquad \tilde{F}_0 = \begin{bmatrix} -G_0\tilde{C}_0 & -G_0\tilde{C}_0 \\ \tilde{A}_0 \!+\! \tilde{B}_0K_0 \!+\! G_0\tilde{C}_0  & \tilde{A}_0 \!+\! G_0\tilde{C}_0 \end{bmatrix},\\
    \tilde{A}_0 &= A_0-\hat{A}_0,\; \tilde{A}_1 = A_1-\hat{A}_1,\; \tilde{B}_0 = B_0-\hat{B}_0,\;\tilde{B}_1 = B_1-\hat{B}_1,\; \tilde{C}_0=C_0 - \hat{C}_0,\; \tilde{C}_1 = C_1-\hat{C}_1.
\end{aligned}
\end{equation*}

Assuming that the pairs $(\hat{A}_0,\hat{B}_0)$ and $(\hat{C}_0,\hat{A}_0)$ are controllable and observable, the gains $K_0$ and $G_0$ can be selected such that there exists a symmetric positive matrix $P_0\in\mathbb{R}^{n_0\times n_0}$ which satisfy the Lyapunov inequality
\begin{equation}\label{eq:stab4}
    P_0 \hat{F}_0 + \hat{F}_0^\ast P_0 \prec -2\delta P_0.
\end{equation}
In the next paragraph, Lyapunov analysis is performed to provide the stability result.

\subsection{Stability analysis}


In this subsection, we prove that our approximated finite-dimensional controller~\eqref{eq:matrix2} stabilizes system~\eqref{eq:system}. More precisely, based on the Lyapunov theorem, if the approximation error is sufficiently small and if the ratio between the upper bound of the input and output operators and the square of the lower bound of the state operator is bounded, then the closed-loop system is exponentially stable.

\begin{thm}\label{thm2}
    Let $\delta>0$ and assume that Assumption~\ref{assum1} holds and that the pairs $(\hat{A}_0,\hat{B}_0)$ and $(\hat{C}_0,\hat{A}_0)$ are controllable and observable.  If there exists an integer $n\geq n_0+n_1$ such that 
    \begin{equation}\label{eq:order2}
        \hat{\varrho}_n:=\frac{16\sum_{i\in\mathbb{N}}\sigmaU{K_0^\ast b_i^\ast b_iK_0}\sum_{i\in\mathbb{N}}\sigmaU{c_i^\ast \begin{bsmallmatrix}-G_0\\G_0\end{bsmallmatrix}^\ast P_0\begin{bsmallmatrix}-G_0\\G_0\end{bsmallmatrix} c_i}}{\delta^2\sigmaD{P_0}(|\sigmaD{A_1}|-\delta)|\sigmaD{A_1}|} \leq 1,
    \end{equation}
    and if the model uncertainties satisfy 
    \begin{equation}\label{eq:eta}
        \eta:=\mathrm{max}(\eta_0,\eta_1,\eta_2)<\delta,
    \end{equation}
    where
    \begin{equation}
    \left\{
    \begin{aligned}
        \eta_0 &=  2 \sqrt{\sigmaU{\tilde{F}_0^\ast\tilde{F}_0}} + \sigmaU{\tilde{C}_1^\ast \begin{bsmallmatrix} -G_0 \\ G_0\end{bsmallmatrix}^\ast P_0\begin{bsmallmatrix} -G_0 \\G_0 \end{bsmallmatrix} \tilde{C}_1} + \frac{\gamma \sigmaU{K_0^\ast \tilde{B}_1^\ast P_1\tilde{B}_1K_0} }{\alpha\sigmaD{P_0}},\\
        \eta_1 &= \left(2 + \frac{\gamma}{\beta}\right) \sqrt{\sigmaU{\tilde{A}_1^\ast\tilde{A}_1}} + \frac{\alpha \sigmaU{\tilde{C}_1^\ast \begin{bsmallmatrix} -G_0 \\ G_0\end{bsmallmatrix}^\ast P_0\begin{bsmallmatrix} -G_0 \\G_0 \end{bsmallmatrix} \tilde{C}_1}}{\beta\sigmaD{P_1}},\\
        \eta_2 &= \sigmaU{K_0^\ast \tilde{B}_1^\ast P_1\tilde{B}_1K_0} + \sqrt{\sigmaU{\tilde{A}_1^\ast\tilde{A}_1}}.
    \end{aligned}
    \right.
    \end{equation}
    with $K_0,G_0,P_0,P_1$ are given by~\eqref{eq:stab1} and \eqref{eq:stab4} and $\alpha,\beta,\gamma$ in~\eqref{eq:abg}, then, the closed-loop system~\eqref{eq:system}-\eqref{eq:cont} with $(K,L,M,N)$ in~\eqref{eq:matrix2} is exponentially stable with the decay rate $\delta-\eta$.
\end{thm}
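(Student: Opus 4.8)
The plan is to reuse the Lyapunov functional $\mathcal{V}$ of~\eqref{eq:lyap} with weights $\alpha,\beta,\gamma$ analogous to those in the proof of Theorem~\ref{thm1} (now using $P_0$ from~\eqref{eq:stab4} and $\hat{B}_1$ in place of $B_1$), and to regard the uncertain closed loop~\eqref{eq:systemCL} as a perturbation of the exactly known one. Writing the $\zeta_0=\begin{bsmallmatrix}\hat{x}_0\\e_0\end{bsmallmatrix}$--dynamics through $\hat{F}_0$ and the $\zeta_1=\begin{bsmallmatrix}\hat{x}_1\\e_1\end{bsmallmatrix}$--dynamics through the true diagonal block $\begin{bsmallmatrix}A_1&0\\0&A_1\end{bsmallmatrix}$, the time derivative of $\mathcal{V}$ along~\eqref{eq:systemCL} splits as a \emph{nominal} contribution (carrying $\hat{F}_0$, $A_1$, the true couplings $C_1e_1$ and $\sum_i c_iz_i$, and the true remainder modes $z_i$) plus a \emph{perturbation} contribution gathering every term that carries a tilde, namely $\tilde{F}_0$, $\tilde{C}_1\hat{x}_1$, $\tilde{A}_1$ and $\tilde{B}_1K_0\hat{x}_0$.

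The nominal contribution is handled verbatim as in Theorem~\ref{thm1}: invoking~\eqref{eq:stab4} on the $\zeta_0$--block, \eqref{eq:stab1} on the $A_1$--blocks, the sign of $\mathrm{Re}(a_i)$ on the remainder, and distributing the crossed terms by Young's inequality exactly as in the computation leading to~\eqref{eq:calculation}, one obtains, under the order condition~\eqref{eq:order2},
\[
\tfrac{\mathrm{d}}{\mathrm{d}t}\mathcal{V}\big|_{\mathrm{nom}} \leq -\delta\,\mathcal{V} - (1-\hat{\varrho}_n)\,|\sigmaD{A_1}|\,\lVert(z_i)\rVert^2 \leq -\delta\,\mathcal{V}.
\]
Since the $z_i$ obey the genuine plant dynamics, they are untouched by the model mismatch, which is why no $\eta$ attaches to $\mathcal{V}_2$.

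The core of the argument is to show $\tfrac{\mathrm{d}}{\mathrm{d}t}\mathcal{V}\big|_{\mathrm{pert}} \leq \eta\,\mathcal{V}$. I would estimate each tilde term and route it, via Young's inequality, to the Lyapunov block(s) it couples. The self term $2\alpha\zeta_0^\ast P_0\tilde{F}_0\zeta_0$ is bounded by $2\sqrt{\sigmaU{\tilde{F}_0^\ast\tilde{F}_0}}\,\mathcal{V}_0$ after symmetrisation; the diagonal perturbation $-2\beta\hat{x}_1^\ast P_1\tilde{A}_1\hat{x}_1$ similarly yields $2\sqrt{\sigmaU{\tilde{A}_1^\ast\tilde{A}_1}}$ on the $\hat{x}_1$--block. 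The coupling $\begin{bsmallmatrix}-G_0\\G_0\end{bsmallmatrix}\tilde{C}_1\hat{x}_1$ entering the $\zeta_0$--equation is split between $\mathcal{V}_0$ and the $\beta P_1$--block; the off--diagonal $2\gamma e_1^\ast P_1\tilde{A}_1\hat{x}_1$ is split between the $\gamma P_1$--block and the $\beta P_1$--block; and $2\gamma e_1^\ast P_1\tilde{B}_1K_0\hat{x}_0$ is split between the $\gamma P_1$--block and $\mathcal{V}_0$. Choosing the Young weights in proportion to $\alpha,\beta,\gamma$ and to $\sigmaD{P_0},\sigmaD{P_1}$, the aggregate coefficients multiplying $\mathcal{V}_0$, the $\hat{x}_1$--block and the $e_1$--block collapse to exactly $\eta_0$, $\eta_1$ and $\eta_2$; taking the maximum gives $\tfrac{\mathrm{d}}{\mathrm{d}t}\mathcal{V}\big|_{\mathrm{pert}}\leq \eta\,\mathcal{V}$.

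Adding the two estimates yields $\tfrac{\mathrm{d}}{\mathrm{d}t}\mathcal{V}\leq -(\delta-\eta)\mathcal{V}$, which is strictly negative by~\eqref{eq:eta}. The two--sided sandwich on $\mathcal{V}$ already established for Theorem~\ref{thm1} then gives, through the Lyapunov theorem, exponential decay of $(\hat{x}_0,e_0,\hat{x}_1,e_1,z_i)$ at rate $\delta-\eta$, and the continuous change of variables back to $(x,\hat{x},z_i)$ produces the constant $\kappa$ of~\eqref{eq:defstab}. I expect the main obstacle to be precisely the bookkeeping of the perturbation block: because each tilde--coupling touches two Lyapunov components, the Young weights must be tuned so that the fragments reassemble into the stated $\eta_0,\eta_1,\eta_2$ — in particular reproducing the multiplicities $2$ and $2+\tfrac{\gamma}{\beta}$ in front of $\sqrt{\sigmaU{\tilde{A}_1^\ast\tilde{A}_1}}$ together with the normalisations $\tfrac{\gamma}{\alpha\sigmaD{P_0}}$ and $\tfrac{\alpha}{\beta\sigmaD{P_1}}$. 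The rest is the same dissipation argument as for the exactly known model.
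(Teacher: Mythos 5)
Your proposal is correct and follows essentially the same route as the paper's proof: the same Lyapunov functional with the weights \eqref{eq:abg}, the nominal terms handled exactly as in Theorem~\ref{thm1} to produce $-\delta\mathcal{V}-(1-\hat{\varrho}_n)|\sigmaD{A_1}|\lVert (z_i)\rVert^2$, and the tilde terms ($\tilde{F}_0$, $\tilde{C}_1\hat{x}_1$, $\tilde{A}_1$, $\tilde{B}_1K_0\hat{x}_0$) absorbed via Young's inequality and matrix norms into the blocks $\mathcal{V}_0$, $\hat{x}_1$ and $e_1$, whose aggregated coefficients are precisely $\eta_0$, $\eta_1$, $\eta_2$. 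The bookkeeping you flag as the main obstacle indeed reassembles as you predict, and the paper carries it out at the same level of detail before invoking the Lyapunov theorem and the change of variables.
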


\begin{proof}
    Consider the Lyapunov functional defined by~\eqref{eq:lyap} with matrices $P_0,P_1$ fixed in accordance with~\eqref{eq:stab1} and \eqref{eq:stab4}. Using Jordan's sorted form $\mathrm{Re}(a_i)\leq \sigmaD{A_1}<0$ and applying the Lyapunov inequalities~\eqref{eq:stab1} and \eqref{eq:stab4}, we obtain 
    \begin{equation*}
    \begin{aligned}
        \frac{\mathrm{d}}{\mathrm{d}t}\mathcal{V}_0(\hat{x}_0,e_0) &\leq -2\alpha\delta  \begin{bmatrix} \hat{x}_0 \\ e_0 \end{bmatrix}^\ast \!\! P_0 \begin{bmatrix} \hat{x}_0 \\ e_0 \end{bmatrix}  +  2\alpha  \begin{bmatrix} \hat{x}_0 \\ e_0 \end{bmatrix}^\ast \!\! P_0 \tilde{F}_0 \begin{bmatrix} \hat{x}_0 \\ e_0 \end{bmatrix} 
        +  2\alpha  \begin{bmatrix} \hat{x}_0 \\ e_0 \end{bmatrix}^\ast \!\! P_0 \begin{bmatrix} -G_0  \\ G_0\end{bmatrix}\left( C_1 e_1 + \tilde{C}_1 \hat{x}_1 + \sum_{i\in\mathbb{N}}c_i z_i\right),\\
        \frac{\mathrm{d}}{\mathrm{d}t}\mathcal{V}_1(\hat{x}_1,e_1) &\leq -2\delta\begin{bmatrix} \hat{x}_1 \\ e_1 \end{bmatrix}^\ast \begin{bmatrix}
            \beta P_1 & 0 \\ 0 & \gamma P_1
        \end{bmatrix} \begin{bmatrix} \hat{x}_1 \\ e_1 \end{bmatrix} + 2 \begin{bmatrix} \hat{x}_1 \\ e_1 \end{bmatrix}^\ast \begin{bmatrix}
            -\beta P_1\tilde{A}_1 \\ \gamma P_1\tilde{A}_1
        \end{bmatrix} \hat{x}_1
        + 2 \begin{bmatrix} \hat{x}_1 \\ e_1 \end{bmatrix}^\ast \begin{bmatrix}
            \beta P_1\hat{B}_1K_0\\ \gamma P_1\tilde{B}_1K_0
        \end{bmatrix}\hat{x}_0,\\
        \frac{\mathrm{d}\mathcal{V}_2}{\mathrm{d}t}(z_i) &\leq -2|\sigmaD{A_1}|\lVert (z_i)\rVert^2 + 2\sum_{i\in\mathbb{N}}z_i^\ast b_i K_0\hat{x}_0.
    \end{aligned}
    \end{equation*}
    As in the previous section (see proof of Theorem~\ref{thm1}), the weights~$(\alpha,\beta,\gamma)$ are selected in the adequate manner
    \begin{equation}\label{eq:abg}
        \alpha \!=\! \frac{4}{\delta} \frac{\sum_{i\in\mathbb{N}}\sigmaU{K_0^\ast b_i^\ast b_iK_0}}{\sigmaD{P_0}(|\sigmaD{A_1}|-\delta)},\quad
        \beta \!=\! \frac{\delta\sum_{i\in\mathbb{N}}\sigmaU{K_0^\ast b_i^\ast b_iK_0}}{(|\sigmaD{A_1}|-\delta)\sigmaU{K_0^\ast \hat{B}_1^\ast P_1 \hat{B}_1K_0}},\quad
        \gamma \!=\! \frac{4\alpha\sigmaU{C_1^\ast \begin{bsmallmatrix}-G_0\\G_0\end{bsmallmatrix}^\ast\!\! P_0\begin{bsmallmatrix}-G_0\\G_0\end{bsmallmatrix} C_1}}{\delta^2 \sigmaD{P_1}},
    \end{equation}
    to obtain
    \begin{equation*}
    \begin{aligned}
         \frac{\mathrm{d}}{\mathrm{d}t}\mathcal{V}(\hat{x}_0,e_0,\hat{x}_1,e_1,z_i) \leq\;& \left(-\delta+2\sqrt{\sigmaU{\tilde{F}_0^\ast\tilde{F}_0}}\right) \alpha \begin{bmatrix} \hat{x}_0 \\ e_0 \end{bmatrix}^\ast\!\!\! P_0 \begin{bmatrix} \hat{x}_0 \\ e_0 \end{bmatrix}
         +\begin{bmatrix} \hat{x}_1 \\ e_1 \end{bmatrix}^\ast \begin{bmatrix} \beta\left(-\delta+2\sqrt{\sigmaU{\tilde{A}_1^\ast\tilde{A}_1}}\right) P_1 & 0 \\ 0 & -2\gamma\delta P_1 \end{bmatrix} \begin{bmatrix} \hat{x}_1 \\ e_1 \end{bmatrix} \\
         & +\left(-\delta -|\sigmaD{A_1}| + 4\frac{\alpha}{\delta}\sum_{i\in\mathbb{N}}\sigmaU{c_i^\ast \begin{bsmallmatrix}-G_0\\G_0\end{bsmallmatrix}^\ast P_0\begin{bsmallmatrix}-G_0\\G_0\end{bsmallmatrix} c_i}\right) \lVert (z_i)\rVert^2\\
        & + 2\alpha \begin{bmatrix} \hat{x}_0 \\ e_0 \end{bmatrix}^\ast P_0\begin{bmatrix} -G_0 \\ G_0 \end{bmatrix} \tilde{C}_1 \hat{x}_1 
        + 2\gamma e_1^\ast P_1(\tilde{A}_1\hat{x}_1+\tilde{B}_1K_0\hat{x}_0).
    \end{aligned}
    \end{equation*}
    Then, we apply Young inequality and matrix supreme norms to the additional two crossed terms as follows
    \begin{align*}
        2 \alpha \begin{bmatrix} \hat{x}_0 \\ e_0 \end{bmatrix}^\ast P_0\begin{bmatrix} -G_0 \\ G_0 \end{bmatrix} \tilde{C}_1 \hat{x}_1 &\leq \alpha \sigmaU{\tilde{C}_1^\ast \begin{bsmallmatrix} -G_0 \\ G_0\end{bsmallmatrix}^\ast P_0\begin{bsmallmatrix} -G_0 \\G_0 \end{bsmallmatrix} \tilde{C}_1} \left( \begin{bmatrix} \hat{x}_0 \\ e_0 \end{bmatrix}^\ast P_0 \begin{bmatrix} \hat{x}_0 \\ e_0 \end{bmatrix} + |\hat{x}_1|^2 \right),\\
        2\gamma e_1^\ast P_1(\tilde{A}_1\hat{x}_1+\tilde{B}_1K_0\hat{x}_0) &\leq
        \gamma \sqrt{\sigmaU{\tilde{A}_1^\ast\tilde{A}_1}}\left( e_1^\ast P_1 e_1 + \hat{x}_1^\ast P_1 \hat{x}_1 \right) + \gamma \sigmaU{K_0^\ast\tilde{B}_1^\ast P_1\tilde{B}_1K_0}\left(e_1^\ast P_1 e_1 + \hat{x}_0^\ast \hat{x}_0\right),
    \end{align*}
    to get to
    \begin{equation*}
        \frac{\mathrm{d}}{\mathrm{d}t}\mathcal{V}(\hat{x}_0,e_0,\hat{x}_1,e_1,z_i) \leq
        (-\delta + \eta )\mathcal{V}(\hat{x}_0,e_0,\hat{x}_1,e_1,z_i) - (1-\hat{\varrho}_n)|\sigmaD{A_1}|\lVert (z_i)\rVert^2,
    \end{equation*}
    where the substantial error $\eta=\max(\eta_0,\eta_1,\eta_2)$. Then, from inequality~\eqref{eq:order2}, we can apply the Lyapunov theorem and obtain the exponential convergence of the state $(\hat{x}_0,e_0,\hat{x}_1,e_1,z_i)$ with the decay rate $\delta-\eta$. By variable changes, we conclude on the exponential stability of system~\eqref{eq:system}-\eqref{eq:cont} in terms of variables $(x,\hat{x},z)$ with the same decay rate.
\end{proof}

An interpretation of the inequalities~\eqref{eq:order2} and~\eqref{eq:eta} is proposed and discussed in the following.

\begin{cor}\label{cor2}
    Let 
    $\delta>0$. Assume that, for any $n\in\mathbb{N}$ and $\varepsilon>0$, there exists matrices $(\hat{A}_0,\hat{A}_1,\hat{B}_0,\hat{B}_1,\hat{C}_0,\hat{C}_1)$ such that 
    \begin{equation}\label{eq:etab}
        \mathrm{max}(\sigmaU{\tilde{A}_0^\ast\tilde{A}_0},\sigmaU{\tilde{A}_1^\ast\tilde{A}_1},\sigmaU{\tilde{B}_0^\ast\tilde{B}_0},\sigmaU{\tilde{B}_1^\ast\tilde{B}_1},\sigmaU{\tilde{C}_0^\ast\tilde{C}_0},\sigmaU{\tilde{C}_1^\ast\tilde{C}_1})\leq \varepsilon.
    \end{equation}
    Assume also that Assumption~\ref{assum1} holds and that the pairs $(\hat{A}_0,\hat{B}_0)$ and $(\hat{C}_0,\hat{A}_0)$ are controllable and observable. If the sorted eigenvalues $\{\lambda_i\}_{i\in\mathbb{N}}$ of the operator $\mathcal{A}$ satisfy $\mathrm{Re}(\lambda_i)\underset{i\to\infty}{\longrightarrow}-\infty$, then there exists a sufficiently large order $n$ such that the closed-loop system is exponentially stable.
\end{cor}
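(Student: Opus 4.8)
The plan is to deduce the statement directly from Theorem~\ref{thm2} by exhibiting an order $n$ together with an admissible approximate model for which both of its hypotheses, \eqref{eq:order2} and \eqref{eq:eta}, hold. Once they do, Theorem~\ref{thm2} delivers exponential stability with rate $\delta-\eta>0$, which is exactly the claimed conclusion. The two hypotheses are governed by two essentially independent knobs: the order $n$ controls \eqref{eq:order2} through the rightmost neglected eigenvalue, exactly as in Corollary~\ref{cor1}, whereas the approximation accuracy $\varepsilon$ in \eqref{eq:etab} controls the mismatch gain $\eta$.

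For \eqref{eq:order2} I would reuse the estimate of Corollary~\ref{cor1} verbatim, now applied to the uncertain ratio $\hat\varrho_n$. Fixing an admissible slow-part design, i.e. the gains $K_0,G_0$ and the matrix $P_0$ attached to the controllable and observable triple $(\hat A_0,\hat B_0,\hat C_0)$ via~\eqref{eq:stab4}, the hypothesis $\mathrm{Re}(\lambda_i)\to-\infty$ gives $|\sigmaD{A_1}|=|\mathrm{Re}(\lambda_n)|\to\infty$ as $n\to\infty$. Since $\mathcal{B}$ and $\mathcal{C}$ are bounded, the two series appearing in the numerator of $\hat\varrho_n$ stay finite and bounded uniformly in $n$, while $\sigmaD{P_0}$ is bounded away from zero; hence $\hat\varrho_n=O(|\sigmaD{A_1}|^{-2})\to0$, so $\hat\varrho_n\leq1$ for every $n$ beyond some threshold $N_1$.

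For \eqref{eq:eta} I would freeze $n$ at a value $N\geq N_1$, so that $A_1$, $C_1$, $P_1$ and the weights $\alpha,\beta,\gamma$ become fixed, and then let $\varepsilon\to0$. Each of $\eta_0,\eta_1,\eta_2$ is built from supremal eigenvalues of products in which every summand carries at least one factor among $\tilde A_0,\tilde A_1,\tilde B_0,\tilde B_1,\tilde C_0,\tilde C_1$, through $\tilde F_0$, $\tilde C_1$, $\tilde A_1$ and $\tilde B_1$. By \eqref{eq:etab} these uncertainty matrices have norm at most $\varepsilon$, so each $\eta_j$ is dominated by a polynomial in $\sqrt{\varepsilon}$ without constant term, and therefore $\eta=\max(\eta_0,\eta_1,\eta_2)=O(\sqrt{\varepsilon})\to0$. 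Consequently $\eta<\delta$ for all $\varepsilon$ below some $\varepsilon_\star$, and selecting such an $\varepsilon$ in \eqref{eq:etab} produces a model meeting both hypotheses.

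The main obstacle is the coupling between these two limits, which dictates the order of the choices. The weights $\alpha,\beta,\gamma$ and the matrix $P_1$ entering $\eta_0,\eta_1,\eta_2$ depend on $n$, whereas the gains $K_0,G_0,P_0$ entering $\hat\varrho_n$ depend on the approximate slow model and hence on $\varepsilon$. To break this circularity I would invoke continuity of the finite-dimensional design near the controllable and observable triple $(\hat A_0,\hat B_0,\hat C_0)$ to obtain bounds on $K_0,G_0$, an upper bound on $\sigmaU{P_0}$ and a strictly positive lower bound on $\sigmaD{P_0}$, all uniform for sufficiently small $\varepsilon$; these uniform bounds render the threshold $N_1$ independent of $\varepsilon$, legitimizing the fixing of $n$ before $\varepsilon$ is driven to zero. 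The delicate point is precisely this uniform non-degeneracy of the design data, together with the check that, at the fixed order $n=N$, the weights $\alpha,\beta,\gamma$ remain finite and positive as $\varepsilon\to0$, in particular that $\sigmaU{K_0^\ast\hat B_1^\ast P_1\hat B_1K_0}$ stays bounded away from zero.
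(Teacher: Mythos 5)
Your proposal is correct and takes essentially the same route as the paper's proof: establish \eqref{eq:order2} exactly as in Corollary~\ref{cor1} (boundedness of $\mathcal{B}$, $\mathcal{C}$ plus $\mathrm{Re}(\lambda_i)\to-\infty$), establish \eqref{eq:eta} by observing that every term in $\eta_0,\eta_1,\eta_2$ carries at least one uncertainty factor so that $\eta=O(\sqrt{\varepsilon})\to 0$ under \eqref{eq:etab}, and then apply Theorem~\ref{thm2}. The paper's proof is a three-line version of this argument; the $n$-versus-$\varepsilon$ ordering and the uniform non-degeneracy of the design data $(K_0,G_0,P_0)$ that you flag as the delicate point are simply left implicit there, so your treatment is, if anything, more careful than the original.
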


\begin{proof}
    The satisfaction of~\eqref{eq:etab} guarantees that $\eta<\delta$ by manipulating matrix norm combinations .
    As for Corollary~\ref{cor1}, the boundedness of operators $\mathcal{B}$ and $\mathcal{C}$ and $\mathrm{Re}(\lambda_i)\underset{i\to\infty}{\longrightarrow}-\infty$ imply that the inequality~\eqref{eq:order2} holds.
    The application of Theorem~\ref{thm2} concludes the proof.
\end{proof}

\begin{rem}
    Two additional conditions appear in Theorem~\ref{thm2} (Corollary~\ref{cor2}) compared to Theorem~\ref{thm1} (Corollary~\ref{cor1}). First, the pairs $(\hat{A}_0,\hat{B}_0)$ and $(\hat{C}_0,\hat{A}_0)$ must be controllable and observable, respectively. Second, we need to be able to make the model errors $\sigmaU{\tilde{A}_0^\ast\tilde{A}_0}$, $\sigmaU{\tilde{A}_1^\ast\tilde{A}_1}$, $\sigmaU{\tilde{B}_0^\ast\tilde{B}_0}$, $\sigmaU{\tilde{B}_1^\ast\tilde{B}_1}$, $\sigmaU{\tilde{C}_0^\ast\tilde{C}_0}$ and $\sigmaU{\tilde{C}_1^\ast\tilde{C}_1}$ as small as needed. Fortunately, for linear ODE-PDE coupled systems, classical numerical methods such as Padé rational approximation~\cite{Baker96} or quasi-spectral approximation~\cite{Gottlieb77} can be used to fulfill these requirements. When used on the right class of systems, they allow to approximate as well as desired the eigenvalues $(s_k)_{k\in\mathbb{N}}$ in a bounded region of the complex plane. Using the analytic expression of the eigenvectors $(v_k)_{k\in\mathbb{N}}$ with respect to the eigenvalues (see~\eqref{eq:xik1} or~\eqref{eq:xik2}), the input, output and state approximated matrices are obtained while keeping the eigenstructure given by~\eqref{eq:ABC} with a sufficiently small model mismatch.
\end{rem}

\section{Numerical applications}

The stabilization of three high-dimensional systems are presented using Matlab, with a code is available online\footnote{\textit{https://github.com/mat-bajo/control-id}}.

\subsection{Toy example}

\begin{figure}
    \begin{subfigure}{0.45\linewidth}
    \centering
    \includegraphics[width=6cm]{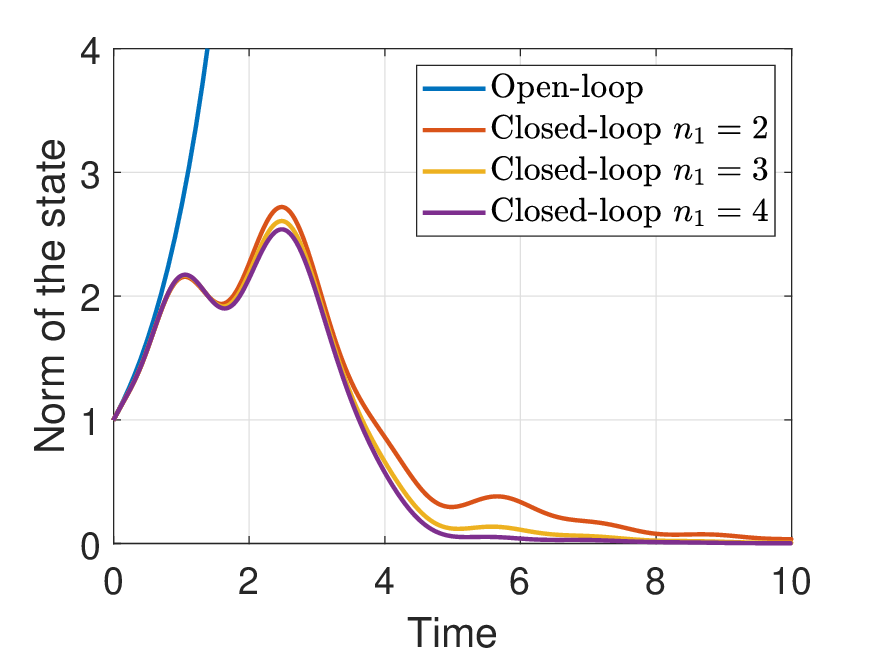}
    \caption{Closed-loop system for several orders $n_1$.}
    \label{fig1}
    \end{subfigure}
    \begin{subfigure}{0.45\linewidth}
    \centering
    \includegraphics[width=6cm]{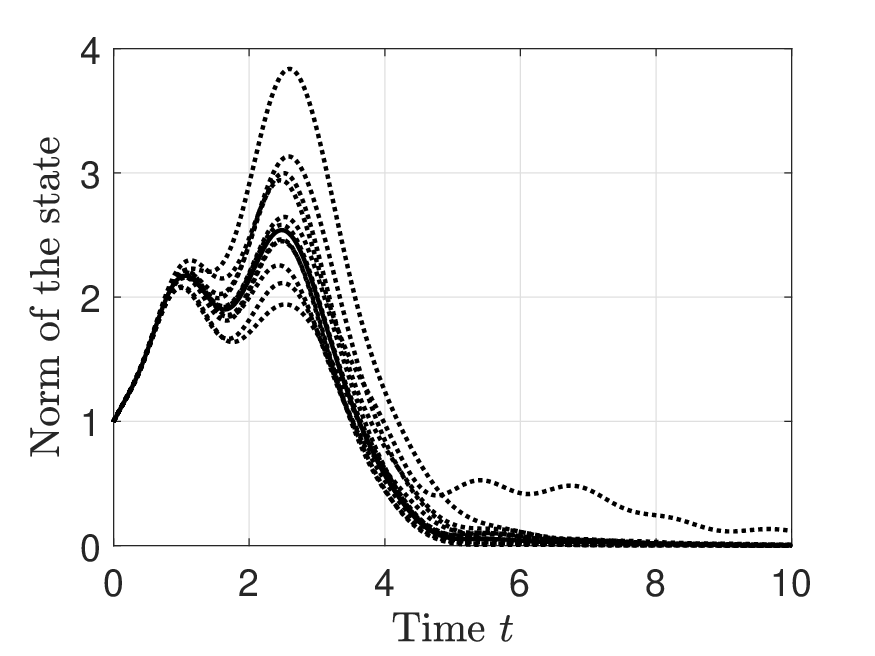}
    \caption{Closed-loop system for $n_1=4$ and several uncertainties.}
    \label{fig2}
    \end{subfigure}
\end{figure}

Consider the following system~\eqref{eq:system} with matrices
\begin{equation}\label{ex:1}
A_0 = \begin{bsmallmatrix}0.5 & 2\\-2 & 0.5\end{bsmallmatrix},\; A_1 = \mathrm{diag}(-1,\dots,-n_1^2),\; B_0=\begin{bsmallmatrix}1\\1\end{bsmallmatrix},\;B_1=\begin{bsmallmatrix}1\\\vdots\\1\end{bsmallmatrix},\;C_0^\top=\begin{bsmallmatrix}1\\1\end{bsmallmatrix},\;C_1^\top=\begin{bsmallmatrix}1\\\vdots\\1\end{bsmallmatrix},\; a_i = -(n_1+i)^2,\; b_i = 1,\; c_i = 1.
\end{equation}
where $n_0=2$ and $n_1\in\mathbb{N}$. 

Designing a control gain $K_0$ such that 
\begin{equation}\label{eq:design}
    \sigma(A_0+B_0K_0) = -0.5\pm i,\quad \sigma(A_0+C_0G_0) = -0.5\pm i.
\end{equation}
the inequality~\eqref{eq:order1} is satisfied for orders $n_1\geq 2$.

In Fig.~\ref{fig1}, we plot the norm of the solution of the closed loop system for orders $n_1=\{2,3,4\}$. We confirm the system stabilizes from the order $2$, as stated in Theorem~\ref{thm1} . In Fig.~\ref{fig2}, we illustrate the uncertainty case. Computing the controller with uncertainties on $A_0,A_1,B_0,B_1,C_0,C_1$, to which we add a uniformly distributed random numbers in $[-0.1,0.1]$ on each coefficients. It turns out that the stability properties are preserved as stated in Theorem~\ref{thm2}.

\subsection{ODE-transport interconnection}

Consider system~\eqref{eq:system1} with  $A=1$, $B= -2$, $C = 1$, $B_u=1$, $C_y=1$ and $h=0.7$.

The generalized characteristic values and vectors being difficult to given analytically, we opt for a numerical method based on Padé's approximation of the transport PDE part, associated to the transfer function $H(s)=e^{-hs}$~\cite{Baker96}. Several ways of constructing the approximated model are then possible and lead to the same result. For a given order $N\in\mathbb{N}$, the Padé approximation enables to estimate the eigenvalues~$(s_k)_{k\in\{1,\dots,N\}}$ and the eigenvectors~$(v_k)_{k\in\{1,\dots,N\}}$ applying the formula~\eqref{eq:xik1}. The matrices $(\hat{A}_0,\hat{A}_1,\hat{B}_0,\hat{B}_1,\hat{C}_0,\hat{C}_1)$ can then be computed through~\eqref{eq:ABC}. Another computational way consists in using the rational Padé approximated transfer function $H_N(s)= C_N (sI_N-A_N )^{-1}B_N + D_N$ to build the following approximated model $\hat{A}_N = 
    \begin{bsmallmatrix}
        A+BD_NC & BC_N\\
        B_NC & A_N
    \end{bsmallmatrix},\quad \hat{B}_{u,N} = 
    \begin{bsmallmatrix}
        B_u\\0
    \end{bsmallmatrix},\quad \hat{C}_{y,N}^\top = 
    \begin{bsmallmatrix}
        C_y\\0
    \end{bsmallmatrix},
$
which can be set in the complex Jordan form and give rise to the approximated matrices $(\hat{A}_0,\hat{A}_1,\hat{B}_0,\hat{B}_1,\hat{C}_0,\hat{C}_1)$.\\
Let $\delta = -\frac{1}{2}$. Once in the Jordan form, we have
\begin{equation}
    \hat{A}_0 = \begin{bsmallmatrix} 0.1863 - 1.5555i &0\\0& 0.1863 + 1.5555i\end{bsmallmatrix},\; \hat{B}_0 = \begin{bsmallmatrix}0.1239 + 0.3596i
   \\0.1239 - 0.3596i\end{bsmallmatrix},\; \hat{C}_0^\top = \begin{bsmallmatrix} 2.2437 - 0.1003i \\  2.2437 + 0.1003i \end{bsmallmatrix}.
\end{equation}
where $n_0=2$. Since the inequality~\eqref{eq:order2} seems to be satisfied for all orders $n_1\geq 0$, we impose $n_1=0$ and matrices $A_1,B_1,C_1$ are not taken into account.  At the order $N=10$, the Padé approximation introduces a model error~$\mathrm{max}(\sigmaU{\tilde{A}_0^\ast\tilde{A}_0},\sigmaU{\tilde{B}_0^\ast\tilde{B}_0},\sigmaU{\tilde{C}_0^\ast\tilde{C}_0})$ which is smaller than the numerical accuracy on Matlab.

With the initial condition $x(\tau)=1$, for all $\tau\leq 0$, the instantaneous state of the open-loop system is plotted in blue on Fig~\ref{fig3}. It is an unstable system. Designing the controller according to~\eqref{eq:design}, we stabilize the system as proved in Corollary~\ref{cor2}. The solution is plotted in red on Fig~\ref{fig3} and converges exponentially fast to zero with a decay rate of $-\frac{1}{2}$. We succeeded in stabilizing an unstable time-delay system with a finite-dimensional controller.

\begin{figure}
    \begin{subfigure}{0.45\linewidth}
    \centering
    \includegraphics[width=6cm]{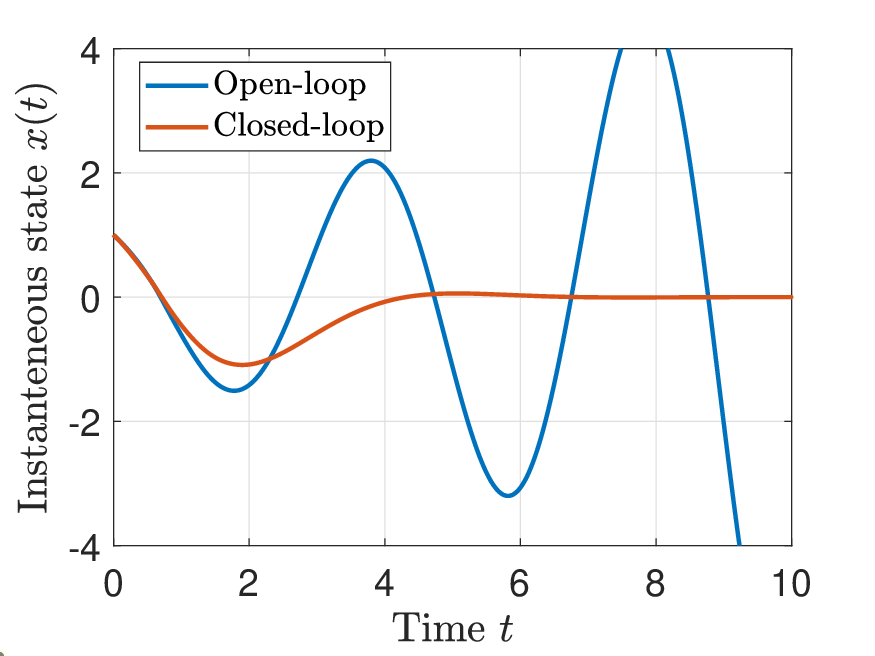}
    \caption{Solution $x(t)$ for ODE-transport case.}
    \label{fig3}
    \end{subfigure}
    \begin{subfigure}{0.45\linewidth}
    \centering
    \includegraphics[width=6cm]{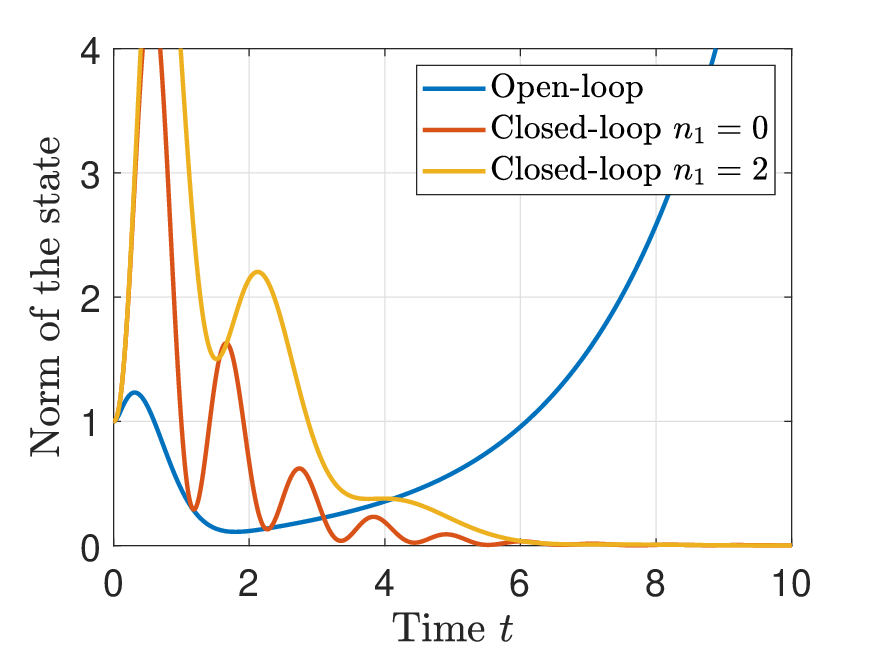}
    \caption{Solution $x(t)$ for ODE-reaction-diffusion case.}
    \label{fig4}
    \end{subfigure}
    \caption{ODE-PDE interconnected systems}
\end{figure}

\subsection{ODE-reaction-diffusion interconnection}

Consider system~\eqref{eq:system2} with for the ODE part
\begin{equation*}
    A = \begin{bsmallmatrix}0 & 1\\ -4 & -4\end{bsmallmatrix},\quad B= \begin{bsmallmatrix}0\\3\end{bsmallmatrix},\quad C = \begin{bsmallmatrix}1&0\end{bsmallmatrix},\quad
    B_u = \begin{bsmallmatrix}0\\1\end{bsmallmatrix},\quad C_y = \begin{bsmallmatrix}1&0\end{bsmallmatrix},
\end{equation*}
and for the PDE part $\lambda=\nu=1$.

As for the previous example, we design our controller in the light of a Padé approximated model. The PDE part, represented by the irrational transfer function $H(s) = \frac{\sqrt{\frac{s-\lambda}{\nu}}}{\sinh(\sqrt{\frac{s-\lambda}{\nu}})}$, is approximated by a rational transfer function at order $N=10$. 
This brings us to  the estimated matrices $(\hat{A}_0,\hat{A}_1,\hat{B}_0,\hat{B}_1,\hat{C}_0,\hat{C}_1)$ in the Jordan form.\\
Let $\delta = -1$. We have
\begin{equation}
\begin{array}{llll}
    &\hat{A}_0 = 0.2483, \quad &\hat{B}_0 = 0.0233, \quad &\hat{C}_0 = 1.9172,\\
    &\hat{A}_1 = \begin{bsmallmatrix} -1.5811 - 1.5285i & 0 \\ 0 & -1.5811 + 1.5285i \end{bsmallmatrix}, \quad &\hat{B}_1 = \begin{bsmallmatrix}0.0023 - 0.0345i\\0.0023 + 0.0345i\end{bsmallmatrix}, \quad &\hat{C}_1 = \begin{bsmallmatrix}-5.5760 + 1.2463i & -5.5760 - 1.2463i \end{bsmallmatrix},\\
\end{array}
\end{equation}
where $n_0=1$ and $n_1=2$. The model error is smaller than the Matlab precision.\\

In Fig.~\ref{fig4}, the stabilization effect of the finite-dimensional controller based on Padé approximation is illustrated.
From the initial condition $\begin{bsmallmatrix} x(0) \\ z(0,\theta)\end{bsmallmatrix} = \begin{bsmallmatrix}\begin{bsmallmatrix}1\\0\end{bsmallmatrix}\\0\end{bsmallmatrix}$, the blue plot represents the $\mathcal{H}=\mathbb{R}^2\times L^2(0,1)$ norm of the solution of the open-loop system with respect to the time. It diverges exponentially fast with a divergence rate given by $\hat{A}_0\simeq \frac{1}{4}$ (see~\cite[Section~IV]{Bajodek23}).
Designing the controller as described in Section~4 and selecting the gains~$(K_0,G_0)$ such that $$\hat{A}_0+\hat{B}_0K_0=-1,\qquad \hat{A}_0+G_0\hat{C}_0=-1,$$
we corroborate Corollary~\ref{cor2} obtaining a stable closed-loop system.
The red plot represents the $\mathcal{H}$ norm of the solution of this closed-looop system with $n_1=0$ or $n_1=2$ resulting in an exponential convergence rate.

\vspace{-0.2cm}
\section{Conclusions}
\vspace{-0.2cm}

This article has explored the design of finite-dimensional dynamical controllers using partial pole placement techniques for infinite-dimensional dynamical systems. Since the reduced model can accurately capture the essential dynamics of the original system and under some assumptions, we have shown that it suffices to selectively place the unstable poles.
The novelty of our approach comes from the use of a Lyapunov functional which strikes a balance between reduced-order modeling and control effectiveness. When properly designed and implemented,  the robustness of the closed-loop system with respect to model uncertainties has also been assessed. In conclusion, we have proposed a synthesis method of reduced-order controllers for high-dimensional plants.
\vspace{-0.2cm}

\bibliography{elsart-bib.bib}

\begin{thebibliography}{10}
\expandafter\ifx\csname url\endcsname\relax
  \def\url#1{\texttt{#1}}\fi
\expandafter\ifx\csname urlprefix\endcsname\relax\def\urlprefix{URL }\fi
\expandafter\ifx\csname href\endcsname\relax
  \def\href#1#2{#2} \def\path#1{#1}\fi

\bibitem{Goodwin00}
G.~Goodwin, S.~Graebe, M.~Salgado, Control System Design, Prentice Hall, 2001.

\bibitem{Silnaj05}
D.~Šiljak, A.~Zečević, Control of large-scale systems: Beyond decentralized
  feedback, Annual Reviews in Control 29~(2) (2005) 169--179.
\newblock \href {https://doi.org/10.1016/j.arcontrol.2005.08.003}
  {\path{doi:10.1016/j.arcontrol.2005.08.003}}.

\bibitem{Krstic08}
M.~Krstic, A.~Smyshlaev, {Boundary Control of PDEs: A Course on Backstepping
  Designs}, SIAM Advances in design and control, Philadelphia, 2008.

\bibitem{Marx21}
S.~Marx, L.~Brivadis, D.~Astolfi, {Forwarding techniques for the global
  stabilization of dissipative infinite-dimensional systems coupled with an
  ODE}, Mathematics of Control, Signals, and Systems 33 (2021) 755--774.
\newblock \href {https://doi.org/10.1007/s00498-021-00299-7}
  {\path{doi:10.1007/s00498-021-00299-7}}.

\bibitem{Bribiesca15}
F.~Bribiesca-Argomedo, M.~Krstic, {Backstepping-forwarding control and
  observation for hyperbolic PDEs with Fredholm integrals}, IEEE Transactions
  on Automatic Control 60~(8) (2015) 2145--2160.
\newblock \href {https://doi.org/10.1109/TAC.2015.2398882}
  {\path{doi:10.1109/TAC.2015.2398882}}.

\bibitem{Meurer12}
L.~Jadachowski, T.~Meurer, A.~Kugi, An efficient implementation of backstepping
  observers for time-varying parabolic {PDEs}, IFAC Proceedings Volumes 45~(2)
  (2012) 798--803.
\newblock \href {https://doi.org/10.3182/20120215-3-AT-3016.00141}
  {\path{doi:10.3182/20120215-3-AT-3016.00141}}.

\bibitem{Karafyllis17}
I.~Karafyllis, M.~Krsti{\'c}, Predictor feedback for delay systems:
  implementations and approximations, Vol. 715, Springer, 2017.

\bibitem{Mondie03}
S.~Mondi{\'e}, W.~Michiels, Finite spectrum assignment of unstable time-delay
  systems with a safe implementation, IEEE Transactions on Automatic Control
  48~(12) (2003) 2207--2212.
\newblock \href {https://doi.org/10.1109/TAC.2003.820147}
  {\path{doi:10.1109/TAC.2003.820147}}.

\bibitem{Rivera87}
D.~Rivera, M.~Morari, {Control-relevant model reduction problems for SISO
  $\mathcal{H}_2$, $\mathcal{H}_\infty$, and $\mu$-controller synthesis},
  International Journal of Control 46~(2) (1987) 505--527.
\newblock \href {https://doi.org/10.1080/00207178708933913}
  {\path{doi:10.1080/00207178708933913}}.

\bibitem{Harkort14}
C.~Harkort, Early-lumping based controller synthesis for linear
  infinite-dimensional systems, FAU University Press, 2014.

\bibitem{Mironchenko20}
A.~Mironchenko, C.~Prieur, Input-to-state stability of infinite-dimensional
  systems: Recent results and open questions, SIAM Review Society for
  Industrial and Applied Mathematics 62~(3) (2020) 529–614.
\newblock \href {https://doi.org/10.1137/19M1291248}
  {\path{doi:10.1137/19M1291248}}.

\bibitem{Zhang10}
Z.~Zhang, S.~Xu, H.~Shen, Reduced-order observer-based output-feedback tracking
  control of nonlinear systems with state delay and disturbance, International
  Journal of Robust and Nonlinear Control 20~(15) (2010) 1723--1738.
\newblock \href {https://doi.org/10.1002/rnc.1544}
  {\path{doi:10.1002/rnc.1544}}.

\bibitem{Morris20}
K.~Morris, Controller Design for Distributed Parameter Systems, Springer, 2020.

\bibitem{Yang07}
F.~Yang, M.~Gani, D.~Henrion, Fixed-order robust $\mathcal{H}_{\infty}$
  controller design with regional pole assignment, IEEE Transactions on
  Automatic Control 52~(10) (2007) 1959--1963.
\newblock \href {https://doi.org/10.1109/TAC.2007.906242}
  {\path{doi:10.1109/TAC.2007.906242}}.

\bibitem{Henrion03}
D.~Henrion, M.~Sebek, V.~Kucera, Positive polynomials and robust stabilization
  with fixed-order controllers, IEEE Transactions on Automatic Control 48~(7)
  (2003) 1178--1186.
\newblock \href {https://doi.org/10.1109/TAC.2003.814103}
  {\path{doi:10.1109/TAC.2003.814103}}.

\bibitem{Alvarez12}
J.~Álvarez, J.~Normey-Rico, M.~Berenguel, Design of {PID} controller with
  filter for distributed parameter systems, IFAC Proceedings Volumes 45~(3)
  (2012) 495--500, 2nd IFAC Conference on Advances in PID Control.
\newblock \href {https://doi.org/10.3182/20120328-3-IT-3014.00084}
  {\path{doi:10.3182/20120328-3-IT-3014.00084}}.

\bibitem{Boussaada20}
D.~Ma, I.~Boussaada, C.~Bonnet, S.~Niculescu, J.~Chen,
  {Multiplicity-Induced-Dominancy Extended to Neutral Delay Equations: Towards
  a Systematic PID Tuning Based on Rightmost Root Assignment}, in: 2020
  American Control Conference (ACC), 2020, pp. 1690--1695.
\newblock \href {https://doi.org/10.23919/ACC45564.2020.9147341}
  {\path{doi:10.23919/ACC45564.2020.9147341}}.

\bibitem{Lhachemi22}
H.~Lhachemi, C.~Prieur, E.~Tr\'{e}lat, Proportional integral regulation control
  of a one-dimensional semilinear wave equation, SIAM Journal on Control and
  Optimization 60~(1) (2022) 1--21.
\newblock \href {https://doi.org/10.1137/20M1346857}
  {\path{doi:10.1137/20M1346857}}.

\bibitem{Boulouz21}
A.~Boulouz, H.~Bounit, S.~Hadd, Well-posedness and exponential stability of
  boundary control systems with dynamic boundary conditions, Systems and
  Control Letters 147 (2021).
\newblock \href {https://doi.org/10.1016/j.sysconle.2020.104825}
  {\path{doi:10.1016/j.sysconle.2020.104825}}.

\bibitem{Boussaada22}
A.~Benarab, I.~Boussaada, S.~Niculescu, K.~Trabelsi, Over one century of
  spectrum analysis in delay systems: an overview and new trends in pole
  placement methods, IFAC-PapersOnLine 55~(36) (2022) 234--239.
\newblock \href {https://doi.org/10.1016/j.ifacol.2022.11.363}
  {\path{doi:10.1016/j.ifacol.2022.11.363}}.

\bibitem{Datta12}
S.~Datta, D.~Chakraborty, B.~Chaudhuri, Partial pole placement with controller
  optimization, IEEE Transactions on Automatic Control 57~(4) (2012)
  1051--1056.
\newblock \href {https://doi.org/10.1109/TAC.2012.2186177}
  {\path{doi:10.1109/TAC.2012.2186177}}.

\bibitem{Katewa21}
V.~Katewa, F.~Pasqualetti, Minimum-gain pole placement with sparse static
  feedback, IEEE Transactions on Automatic Control 66~(8) (2021) 3445--3459.
\newblock \href {https://doi.org/10.1109/TAC.2020.3018615}
  {\path{doi:10.1109/TAC.2020.3018615}}.

\bibitem{Curtain82}
R.~Curtain, Finite-dimensional compensator design for parabolic distributed
  systems with point sensors and boundary input, IEEE Transactions on Automatic
  Control 27~(1) (1982) 98--104.
\newblock \href {https://doi.org/10.1109/TAC.1982.1102875}
  {\path{doi:10.1109/TAC.1982.1102875}}.

\bibitem{Jones10}
B.~Jones, E.~Kerrigan, When is the discretization of a spatially distributed
  system good enough for control?, Automatica 46 (2010) 1462--1468.
\newblock \href {https://doi.org/10.1016/j.automatica.2010.06.001}
  {\path{doi:10.1016/j.automatica.2010.06.001}}.

\bibitem{Bergeling20}
C.~Bergeling, K.~Morris, A.~Rantzer, Closed-form $\mathcal{H}_{\infty}$ optimal
  control for a class of infinite-dimensional systems, Automatica 117 (2020)
  108916.
\newblock \href {https://doi.org/10.1016/j.automatica.2020.108916}
  {\path{doi:10.1016/j.automatica.2020.108916}}.

\bibitem{Balas83}
M.~Balas, Finite-dimensional controllers for linear distributed parameter
  systems: Exponential stability using residual mode filters, Journal of
  Mathematical Analysis and Applications 133~(2) (1988) 283--296.
\newblock \href {https://doi.org/10.1016/0022-247X(88)90401-5}
  {\path{doi:10.1016/0022-247X(88)90401-5}}.

\bibitem{Sakawa83}
Y.~Sakawa, Feedback stabilization of linear diffusion systems, SIAM Journal on
  Control and Optimization 21~(5) (1983) 667--676.
\newblock \href {https://doi.org/10.1137/0321040} {\path{doi:10.1137/0321040}}.

\bibitem{Luenberger64}
D.~Luenberger, Observing the state of a linear system, IEEE Transactions on
  Military Electronics 8~(2) (1964) 74--80.

\bibitem{Gohberg78}
I.~Gohberg, M.~Kre{\u\i}n, Introduction to the theory of linear non self
  adjoint operators, Vol.~18, American Mathematical Society, 1978.

\bibitem{Guo01}
B.~Guo, H.~Zwart, Riesz spectral systems, Tech. rep., University of Twente,
  Department of Applied Mathematics (2001).

\bibitem{Michiels03}
W.~Michiels, D.~Roose, An eigenvalue based approach for the robust
  stabilization of linear time-delay systems, International Journal of Control
  76~(7) (2003) 678--686.
\newblock \href {https://doi.org/10.1080/0020717031000105599}
  {\path{doi:10.1080/0020717031000105599}}.

\bibitem{Katz20}
R.~Katz, E.~Fridman, {Constructive method for finite-dimensional observer-based
  control of 1-D parabolic PDEs}, Automatica 122 (2020) 109285.
\newblock \href {https://doi.org/10.1016/j.automatica.2020.109285}
  {\path{doi:10.1016/j.automatica.2020.109285}}.

\bibitem{Han11}
Z.~Han, G.~Xu, {Exponential stability of Timoshenko beam system with delay
  terms in boundary feedbacks}, ESAIM: Control, Optimisation and Calculus of
  Variations 17~(2) (2011) 552--574.
\newblock \href {https://doi.org/10.1051/cocv/2010009}
  {\path{doi:10.1051/cocv/2010009}}.

\bibitem{Gu03}
K.~Gu, J.~Chen, V.~L. Kharitonov, Stability of time-delay systems, Springer
  Science \& Business Media, 2003.

\bibitem{Conway10}
J.~Conway, A course in functional analysis, 2nd Edition, Graduate texts in
  mathematics, Springer Science+Business Media, New York, 2010.

\bibitem{Tucsnak09}
M.~Tucsnak, G.~Weiss, {Observation and Control for Operator Semigroups},
  Springer, 2009.

\bibitem{Curtain95}
R.~Curtain, H.~Zwart, {An Introduction to Infinite-Dimensional Linear Systems
  Theory}, Springer, 1995.

\bibitem{Hale93}
J.~Hale, S.~Verduyn~Lunel, Introduction to functional differential equations,
  Vol.~99 of Applied mathematical Sciences, Springer, 1993.

\bibitem{Bajodek23}
M.~Bajodek, H.~Lhachemi, G.~Valmorbida, {Instability conditions for
  reaction-diffusion-ODE systems}, submitted to IEEE Transactions on Automatic
  Control (2023).
\newblock \href {http://arxiv.org/abs/2303.04446} {\path{arXiv:2303.04446}}.

\bibitem{Zhao19}
D.~Zhao, J.~Wang, Y.~Guo, {The direct feedback control and exponential
  stabilization of a coupled heat PDE-ODE system with Dirichlet boundary
  interconnection}, International Journal of Control, Automation and Systems 17
  (2019) 38–45.
\newblock \href {https://doi.org/10.1007/s12555-017-0713-y}
  {\path{doi:10.1007/s12555-017-0713-y}}.

\bibitem{Baker96}
G.~Baker, P.~Graves-Morris, Pad\'e Approximants, 2nd Edition, Encyclopedia of
  Mathematics and its Applications $n^{\circ}59$, 1996.

\bibitem{Gottlieb77}
D.~Gottlieb, S.~Orszag, Numerical analysis of spectral methods: theory and
  applications, CBMS-NSF Regional Conference Series in Applied Mathematics,
  SIAM, 1977.

\end{thebibliography}

\end{document}